\documentclass[draft,onecolumn]{IEEEtran}
\IEEEoverridecommandlockouts
% The preceding line is only needed to identify funding in the first footnote. If that is unneeded, please comment it out.
\usepackage{cite}
\usepackage{amsmath,amssymb,amsfonts,amsthm}
\usepackage{algorithmic}
\usepackage{graphicx}
\usepackage{textcomp}
\usepackage{xcolor}
\def\BibTeX{{\rm B\kern-.05em{\sc i\kern-.025em b}\kern-.08em
    T\kern-.1667em\lower.7ex\hbox{E}\kern-.125emX}}
%-------add by zhang yanna
\newtheorem{thm}{Theorem}
\newtheorem{defn}{Definition}

\newtheorem{lem}{Lemma}
\newtheorem{pro}{Proposition}
\newtheorem{corollary}{Corollary}

\usepackage{geometry}
\geometry{top=1in,left=0.75in,right= 0.75in,bottom=1in}
\columnsep 0.2in
\usepackage{setspace}
\begin{document}

\title{  Uncertainty principles for the windowed Hankel transform}
\author{\IEEEauthorblockN{Wen-Biao, Gao$^{1}$,
		Bing-Zhao, Li$^{1,2,\star}$,}\\
	\IEEEauthorblockA{
		1. School of Mathematics and Statistics, Beijing Institute of Technology, Beijing 102488, P.R. China}\\
	2. Beijing Key Laboratory on MCAACI, Beijing Institute of Technology, Beijing 102488, P.R. China\\
$^{\star}$Corresponding author: li$\_$bingzhao@bit.edu.cn
}

\maketitle

\begin{abstract}
\label{abstract}
\begin{spacing}{1.25}
The aim of this paper is to prove some new uncertainty principles for the windowed Hankel transform. They include
uncertainty principle for orthonormal sequence, local uncertainty principle,
logarithmic uncertainty principle and Heisenberg-type uncertainty principle. As a side result, we obtain the Shapiro's dispersion theorem for the windowed Hankel transform.
\end{spacing}
\end{abstract}

\begin{IEEEkeywords}

Hankel transform, Windowed Hankel transform, Uncertainty principle
\end{IEEEkeywords}

\noindent  {\bf 1. Introduction}

The uncertainty principle is a very important theorem in harmonic analysis. It states that a nonzero function and its Fourier
transform cannot be simultaneously sharply concentrated.
As a classical uncertainty principle, the Heisenberg uncertainty principle has been extended to other transforms such as
the linear canonical transform [30], the fractional Fourier transform [27] and the Dunkl transform [28].
The uncertainty principle for the Hankel transform was first proved by Bowie [29], then other uncertainty relations for the Hankel transform
 have been investigated [1,3,4,28].

The Hankel transform is found as a very useful mathematical tool in many fields of physics, geophysics, signal processing and other fields [18,19,20,21].
In a series of papers [22-26] various kinds of Hankel transform have been discussed in details. Ghobber and Omri [8] introduced the windowed Hankel transform
and described its basic properties. Baccar et al. [5] discussed the time-frequency analysis of localization operators the windowed
transform in the Hankel setting.
In this paper, Our main aim is to introduce some uncertainty principles for the windowed Hankel transform.

To do so, we need to introduce some relevant contents.

The Hankel transform of order $\alpha$ is defined on $L^{1}_{\alpha}(\mathbb{R_{+}})$ by [15]
\begin{align}
		\begin{split}
		\mathcal{H}_{\alpha}(f)(\lambda)=\int^{+\infty}_{0}f(t)j_{\alpha}(\lambda t)\emph{d}\gamma_{\alpha}(t),
		\end{split}
	\end{align}
where $\alpha\geq-\frac{1}{2}$, $\emph{d}\gamma_{\alpha}$ the measure defined on $[0,+\infty)$ by $\emph{d}\gamma_{\alpha}(t)=t^{2\alpha+1}/2^{\alpha}\Gamma(\alpha+1)\emph{d}t$,
$\Gamma $ is the gamma function and
$j_{\alpha}$ is the Bessel function given by
\begin{equation*}
	j_{\alpha}(t)=\Gamma(\alpha+1)\sum^{\infty}_{n=0}\frac{(-1)^{n}}{n!\Gamma(n+\alpha+1)}\left(\frac{t}{2}\right)^{2n}
	\end{equation*}
The modified Bessel function $j_{\alpha}(t)$ has the following integral representation [9,11,12],
for every $t\in \mathbb{C}$, we have
\begin{align}
		\begin{split}
		j_{\alpha}(t)=\begin{cases}
		\frac{2\Gamma(\alpha+1)}{\sqrt{\pi}\Gamma(\alpha+1/2)}\int^{1}_{0}(1-x^{2})^{\alpha-1/2}\cos(tx)\emph{d}x,   & \textrm{if} \ \ \alpha>-\frac{1}{2}  \\
		\cos z,    &\textrm{if} \ \ \alpha=-\frac{1}{2}.
		\end{cases}
		\end{split}
	\end{align}
In particular for every $z\in\mathbb{R}$
\begin{align*}
		\begin{split}
		\mid j_{\alpha}(z)\mid\leq1
		\end{split}
	\end{align*}
The integral representation (2) shows that for each $n\in\mathbb{N}$ and $t\in \mathbb{C}$
\begin{align*}
		\begin{split}
		\mid j^{(n)}_{\alpha}(t)\mid\leq e^{|Im(t)|}
		\end{split}
	\end{align*}
In particular for each $n\in\mathbb{N}$ and $x\in \mathbb{R}$
\begin{align*}
		\begin{split}
		\mid j^{n}_{\alpha}(x)\mid\leq1
		\end{split}
	\end{align*}

$L^{p}_{\alpha}(\mathbb{R_{+}})$ the space of measurable function $f$ on $[0,+\infty)$ satisfying \cite{[3]}
\begin{align}
		\begin{split}
		\parallel f\parallel_{L^{p}_{\alpha}(\mathbb{R_{+}})}=\left( \int^{+\infty}_{0}|f(t)|^{p}\emph{d}\gamma_{\alpha}(t)\right)^{1/p}<+\infty, \ \ \ \textrm{if} \ p\in[1,+\infty),
		\end{split}
	\end{align}
\begin{equation}
		\begin{split}
		\parallel f\parallel_{L^{\infty}_{\alpha}(\mathbb{R_{+}})}=\textrm{ess} sup_{t\in[0,+\infty)}|f(t)|<+\infty, \ \ \ \textrm{if} \ p=+\infty,
		\end{split}
	\end{equation}
Moreover, the Hankel transform satisfies the following inversion formula and Parseval equality [15]:

(1) Inversion formula: Let $f \in L^{1}_{\alpha}(\mathbb{R_{+}})$ such that $\mathcal{H}_{\alpha}(f)\in L^{1}_{\alpha}(\mathbb{R_{+}})$, then for every $t\in [0,+\infty)$ we have
\begin{equation*}
		\begin{split}
		f(t)=\int^{+\infty}_{0}\mathcal{H}_{\alpha}(f)(r)j_{\alpha}(rt)\emph{d}\gamma_{\alpha}(r)
		\end{split}
	\end{equation*}

(2) Parseval equality: The Hankel transform can be extended to an isometric isomorphism form $L^{2}_{\alpha}(\mathbb{R_{+}})$
onto itself. Moreover for every $f,g \in L^{2}_{\alpha}(\mathbb{R_{+}})$  we have the following Parseval equality:
\begin{equation}
		\begin{split}
		\int^{+\infty}_{0}f(t)\overline{g(t)}\emph{d}\gamma_{\alpha}(t)=
\int^{+\infty}_{0}\mathcal{H}_{\alpha}(f)(r)\overline{\mathcal{H}_{\alpha}(f)(r)}\emph{d}\gamma_{\alpha}(r)
		\end{split}
	\end{equation}
For every measurable subset $E\subseteq[0,+\infty)$, $\chi_{E}$ is the characteristic function of $E$ such that \cite{[4]}
\begin{align}
		\begin{split}
			\chi_{E}(t)=\begin{cases}
		1,   &t\in E  \\
		0,    &t\in E^{c}.
		\end{cases}
		\end{split}
	\end{align}
The translation operator associated with the Hankel transform is defined by \cite{[2]}
\begin{equation}
		\begin{split}
		\tau^{\alpha}_{k}f(t)=\frac{\Gamma(\alpha+1)}{\Gamma(1/2)\Gamma(\alpha+(1/2))}
\int^{\pi}_{0}f(\sqrt{t^{2}+k^{2}+2tk\cos\theta})(\sin\theta)^{2\alpha}\emph{d}\theta
		\end{split}
	\end{equation}
The operator $\tau^{\alpha}_{k}$ can be also written by the formula
\begin{equation}
		\begin{split}
		\tau^{\alpha}_{k}f(t)=\int^{\infty}_{0}f(x)K(k,t,x)\emph{d}\gamma_{\alpha}(x)
		\end{split}
	\end{equation}
where $K(k,t,x)$ is the kernel given by
\begin{align*}
		\begin{split}
			K(k,t,x)=\begin{cases}
		\frac{2\pi^{\alpha+1/2}\Gamma(\alpha+1)^{2}}{\Gamma(\alpha+(1/2))}
\frac{\Delta(k,t,x)^{2\alpha-1}}{(ktx)^{2\alpha}},   &\textrm{if}\ \ |k-t|<x<k+t  \\
		0,    &\textrm{otherwise}.
		\end{cases}
		\end{split}
	\end{align*}
where
\begin{align*}
		\begin{split}
			\Delta(k,t,x)=((k+t)^{2}-x^{2})^{1/2}(x^{2}-(k-t)^{2})^{1/2}
		\end{split}
	\end{align*}
is the area of the triangle with side length $k,t,x$.

The kernel $K(k,t,x)$ is symmetric in the variables $k,t,x$, and satisfy
\begin{align*}
		\begin{split}
		\int^{+\infty}_{0}K(k,t,x)\emph{d}\gamma_{\alpha}(x)=1
		\end{split}
	\end{align*}
For every $f\in L^{p}_{\alpha}(\mathbb{R_{+}})$ and every $k\in \mathbb{R_{+}}$, the function $\tau^{\alpha}_{k}(f)$ belong to
the space $L^{p}_{\alpha}(\mathbb{R_{+}})$ and
\begin{equation}
		\begin{split}
		\|\tau^{\alpha}_{k}(f)\|_{L^{p}_{\alpha}(\mathbb{R_{+}})}\leq\|f\|_{L^{p}_{\alpha}(\mathbb{R_{+}})}
		\end{split}
	\end{equation}
In particular, for every $k,t\geq0$, we have
\begin{equation}
		\begin{split}
		\tau^{\alpha}_{k}(f)(t)=\tau^{\alpha}_{t}(f)(k)
		\end{split}
	\end{equation}
If $f\in L^{1}_{\alpha}(\mathbb{R_{+}})$, then
\begin{equation}
		\begin{split}
		\int^{+\infty}_{0}\tau^{\alpha}_{k}(f)(t)\emph{d}\gamma_{\alpha}(t)
=\int^{+\infty}_{0}f(t)\emph{d}\gamma_{\alpha}(t)
		\end{split}
	\end{equation}
The modulation operator associated with the Hankel transform is defined by
\begin{equation*}
		\begin{split}
		\mathcal{M}^{\alpha}_{s}g:=\mathcal{H}_{\alpha}(\sqrt{\tau^{\alpha}_{s}|\mathcal{H}_{\alpha}(g)|^{2}})
		\end{split}
	\end{equation*}
The convolution product associated with the Hankel transform is defined for two functions $f$ and $g$ by [3,6,7]
\begin{equation*}
		\begin{split}
		f\sharp_{\alpha}g(t)=\int^{+\infty}_{0}f(r)\tau^{\alpha}_{t}(g)(r)\emph{d}\gamma_{\alpha}(r)
		\end{split}
	\end{equation*}
and
\begin{equation}
		\begin{split}
		\mathcal{H}_{\alpha}(f\sharp_{\alpha}g)=\mathcal{H}_{\alpha}(f)\mathcal{H}_{\alpha}(g)
		\end{split}
	\end{equation}

The paper is organized as follows: In Section 2, we present some preliminaries related to the windowed Hankel transform.
 Some different uncertainty principles associated with the windowed Hankel transform are provided in Section 3.

\noindent  {\bf 2. The windowed Hankel transform}

Let $g\in L^{2}_{\alpha}(\mathbb{R_{+}})$ and $s\in \mathbb{R}_{+}$, the modulation of $g$ is defined by [5,8]
\begin{equation}
		\begin{split}
		\mathcal{M}^{\alpha}_{s}g=\mathcal{H}_{\alpha}(\sqrt{\tau^{\alpha}_{s}|\mathcal{H}_{\alpha}(g)|^{2}})
		\end{split}
	\end{equation}
Then for every $g\in L^{2}_{\alpha}(\mathbb{R_{+}})$ and $s\in \mathbb{R}_{+}$, we obtained
\begin{equation}
		\begin{split}
		\|\mathcal{M}^{\alpha}_{s}g\|_{L^{2}_{\alpha}(\mathbb{R_{+}})}= \|g\|_{L^{2}_{\alpha}(\mathbb{R_{+}})}
		\end{split}
	\end{equation}
For a non-zero window function $g\in L^{2}_{\alpha}(\mathbb{R_{+}})$ and $(k,s)\in \mathbb{R}_{+}^{2}$, the function $g^{\alpha}_{k,s}$ defined by
\begin{equation}
		\begin{split}
		g^{\alpha}_{k,s}=\tau^{\alpha}_{k}\mathcal{M}^{\alpha}_{s}g
		\end{split}
	\end{equation}
Now, for any function $f\in L^{2}_{\alpha}(\mathbb{R_{+}})$, we define the windowed Hankel transform by [5,8]
\begin{equation}
		\begin{split}
		W^{\alpha}_{g}(f)(k,s)=\int^{+\infty}_{0}f(t)\overline{g^{\alpha}_{k,s}(t)}\emph{d}\gamma_{\alpha}(t), \ \ \ (k,s)\in \mathbb{R}_{+}^{2}
		\end{split}
	\end{equation}
which can be also written in the form
\begin{equation}
		\begin{split}
		W^{\alpha}_{g}(f)(k,s)=f\sharp_{\alpha}\overline{\mathcal{M}^{\alpha}_{s}g(k)}
		\end{split}
	\end{equation}
Define the measure $\nu_{\alpha}$ on $\mathbb{R^{*}_{+}}\times\mathbb{R_{+}}$, by
\begin{equation*}
		\begin{split}
		\emph{d}\nu_{\alpha}(k,s)=\emph{d}\gamma_{\alpha}(k)\emph{d}\gamma_{\alpha}(s)
		\end{split}
	\end{equation*}
The windowed Hankel transform satisfies the following properties [5,8].
\begin{pro}
Let $g\in L^{2}_{\alpha}(\mathbb{R_{+}})$ be a non-zero window function, then we have

(1) The Cauchy-Schwarz inequality: For any $f\in L^{2}_{\alpha}(\mathbb{R_{+}})$,
\begin{equation}
		\begin{split}
		\|W^{\alpha}_{g}(f)(k,s)\|_{L^{\infty}_{\alpha}(\mathbb{R_{+}}\times \mathbb{R_{+}^{*}})}\leq \|f\|_{L^{2}_{\alpha}(\mathbb{R_{+}})}\|g\|_{L^{2}_{\alpha}(\mathbb{R_{+}})}
		\end{split}
	\end{equation}
(2) Plancherel's formula: For any $f\in L^{2}_{\alpha}(\mathbb{R_{+}})$,
\begin{equation}
		\begin{split}
		\|W^{\alpha}_{g}(f)(k,s)\|_{L^{2}_{\alpha}(\mathbb{R_{+}}\times \mathbb{R_{+}^{*}})}=
\|f\|_{L^{2}_{\alpha}(\mathbb{R_{+}})}\|g\|_{L^{2}_{\alpha}(\mathbb{R_{+}})}
		\end{split}
	\end{equation}
(3) Orthogonality relation: For any $f,h\in L^{2}_{\alpha}(\mathbb{R_{+}})$,
\begin{equation}
		\begin{split}
		\int^{+\infty}_{0}\int^{+\infty}_{0}W^{\alpha}_{g}(f)(k,s)\overline{W^{\alpha}_{g}(h)(k,s)}
\emph{d}\nu_{\alpha}(k,s)=\|g\|^{2}_{L^{2}_{\alpha}
(\mathbb{R_{+}})}\int^{+\infty}_{0}f(t)\overline{h(t)}\emph{d}\gamma_{\alpha}(t)
		\end{split}
	\end{equation}
(4) Reproducing kernel Hilbert space: For any $(k',s');(k,s)\in \mathbb{R_{+}}\times \mathbb{R_{+}^{*}}$,
\begin{equation}
		\begin{split}
		H_{g}(k',s';k,s)=\frac{1}{\|g\|^{2}_{L^{2}_{\alpha}(\mathbb{R_{+}})}}
g^{\alpha}_{k',s'}\sharp_{\alpha}\overline{\mathcal{M}^{\alpha}_{s}g(k)}=
\frac{1}{\|g\|^{2}_{L^{2}_{\alpha}(\mathbb{R_{+}})}}W^{\alpha}_{g}(g^{\alpha}_{k',s'})(k,s)
		\end{split}
	\end{equation}
Furthermore, the kernel is pointwise bounded
\begin{equation}
		\begin{split}
		\mid H_{g}(k',s';k,s)\mid \leq1, \ \ \forall (k',s');(k,s)\in \mathbb{R_{+}}\times \mathbb{R_{+}^{*}}
		\end{split}
	\end{equation}
\end{pro}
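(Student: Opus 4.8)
The plan is to establish (1) directly, and to reduce (2)--(4) to a single identity obtained by computing the Hankel transform of $W^{\alpha}_{g}(f)$ in its first variable and then interchanging two integrals. For (1): since $W^{\alpha}_{g}(f)(k,s)=\int_{0}^{\infty}f(t)\overline{g^{\alpha}_{k,s}(t)}\,d\gamma_{\alpha}(t)$ is the $L^{2}_{\alpha}(\mathbb{R}_{+})$ inner product of $f$ with $g^{\alpha}_{k,s}=\tau^{\alpha}_{k}\mathcal{M}^{\alpha}_{s}g$, the Cauchy--Schwarz inequality gives $|W^{\alpha}_{g}(f)(k,s)|\le\|f\|_{L^{2}_{\alpha}(\mathbb{R}_{+})}\|g^{\alpha}_{k,s}\|_{L^{2}_{\alpha}(\mathbb{R}_{+})}$, and the contraction property $\|\tau^{\alpha}_{k}h\|_{L^{2}_{\alpha}}\le\|h\|_{L^{2}_{\alpha}}$ together with the isometry $\|\mathcal{M}^{\alpha}_{s}g\|_{L^{2}_{\alpha}}=\|g\|_{L^{2}_{\alpha}}$ yields $\|g^{\alpha}_{k,s}\|_{L^{2}_{\alpha}}\le\|g\|_{L^{2}_{\alpha}}$; taking the supremum over $(k,s)$ proves (1). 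The announced identity is
\[
\mathcal{H}_{\alpha}\big(W^{\alpha}_{g}(f)(\cdot,s)\big)=\mathcal{H}_{\alpha}(f)\,\psi_{s},\qquad\psi_{s}:=\sqrt{\tau^{\alpha}_{s}|\mathcal{H}_{\alpha}(g)|^{2}}.
\]
To get it I would start from $W^{\alpha}_{g}(f)(\cdot,s)=f\sharp_{\alpha}\overline{\mathcal{M}^{\alpha}_{s}g}$, apply the product formula $\mathcal{H}_{\alpha}(u\sharp_{\alpha}v)=\mathcal{H}_{\alpha}(u)\mathcal{H}_{\alpha}(v)$, and note that $\psi_{s}$ is real, nonnegative and lies in $L^{2}_{\alpha}(\mathbb{R}_{+})$ because $\int_{0}^{\infty}\psi_{s}^{2}\,d\gamma_{\alpha}=\int_{0}^{\infty}\tau^{\alpha}_{s}|\mathcal{H}_{\alpha}(g)|^{2}\,d\gamma_{\alpha}=\||\mathcal{H}_{\alpha}(g)|^{2}\|_{L^{1}_{\alpha}}=\|g\|^{2}_{L^{2}_{\alpha}}$, using the invariance of $\tau^{\alpha}_{s}$ under $\int d\gamma_{\alpha}$ and Parseval for $\mathcal{H}_{\alpha}$; hence $\mathcal{M}^{\alpha}_{s}g=\mathcal{H}_{\alpha}(\psi_{s})$ is real (the kernel $j_{\alpha}$ being real), so $\overline{\mathcal{M}^{\alpha}_{s}g}=\mathcal{M}^{\alpha}_{s}g$, and by involutivity of $\mathcal{H}_{\alpha}$ on $L^{2}_{\alpha}$ one gets $\mathcal{H}_{\alpha}(\overline{\mathcal{M}^{\alpha}_{s}g})=\psi_{s}$. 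Consequently $W^{\alpha}_{g}(f)(\cdot,s)=\mathcal{H}_{\alpha}\big(\mathcal{H}_{\alpha}(f)\,\psi_{s}\big)$, the Hankel transform of the $L^{1}_{\alpha}$ function $\mathcal{H}_{\alpha}(f)\,\psi_{s}$ (a product of two $L^{2}_{\alpha}$ functions), so everything is well defined.

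For (2), the Parseval identity for $\mathcal{H}_{\alpha}$ applied in the variable $k$ gives, for every $s$,
\[
\int_{0}^{\infty}|W^{\alpha}_{g}(f)(k,s)|^{2}\,d\gamma_{\alpha}(k)=\int_{0}^{\infty}|\mathcal{H}_{\alpha}(f)(\lambda)|^{2}\,\tau^{\alpha}_{s}|\mathcal{H}_{\alpha}(g)|^{2}(\lambda)\,d\gamma_{\alpha}(\lambda),
\]
as an identity of $[0,\infty]$-valued quantities. Integrating in $s$ and applying Tonelli (the integrand is nonnegative), then using the symmetry $\tau^{\alpha}_{s}\varphi(\lambda)=\tau^{\alpha}_{\lambda}\varphi(s)$ and the invariance $\int_{0}^{\infty}\tau^{\alpha}_{\lambda}\varphi\,d\gamma_{\alpha}=\int_{0}^{\infty}\varphi\,d\gamma_{\alpha}$ with $\varphi=|\mathcal{H}_{\alpha}(g)|^{2}$, the inner $s$-integral equals $\int_{0}^{\infty}|\mathcal{H}_{\alpha}(g)|^{2}\,d\gamma_{\alpha}=\|g\|^{2}_{L^{2}_{\alpha}}$, and one is left with $\|W^{\alpha}_{g}(f)\|^{2}_{L^{2}_{\alpha}(\mathbb{R}_{+}\times\mathbb{R}_{+}^{*})}=\|g\|^{2}_{L^{2}_{\alpha}}\int_{0}^{\infty}|\mathcal{H}_{\alpha}(f)|^{2}\,d\gamma_{\alpha}=\|f\|^{2}_{L^{2}_{\alpha}}\|g\|^{2}_{L^{2}_{\alpha}}$, which is (2) and also shows $W^{\alpha}_{g}(f)(\cdot,s)\in L^{2}_{\alpha}(\mathbb{R}_{+})$ for a.e.\ $s$.

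Part (3) is the polarised version of the same computation: with a second function $h$, the identity above yields $\int_{0}^{\infty}W^{\alpha}_{g}(f)(k,s)\overline{W^{\alpha}_{g}(h)(k,s)}\,d\gamma_{\alpha}(k)=\int_{0}^{\infty}\mathcal{H}_{\alpha}(f)(\lambda)\overline{\mathcal{H}_{\alpha}(h)(\lambda)}\,\tau^{\alpha}_{s}|\mathcal{H}_{\alpha}(g)|^{2}(\lambda)\,d\gamma_{\alpha}(\lambda)$, and the interchange in $s$ is now justified by absolute integrability since $\int_{0}^{\infty}\!\int_{0}^{\infty}|\mathcal{H}_{\alpha}(f)||\mathcal{H}_{\alpha}(h)|\,\tau^{\alpha}_{s}|\mathcal{H}_{\alpha}(g)|^{2}\,d\gamma_{\alpha}(s)\,d\gamma_{\alpha}(\lambda)=\|g\|^{2}_{L^{2}_{\alpha}}\int_{0}^{\infty}|\mathcal{H}_{\alpha}(f)||\mathcal{H}_{\alpha}(h)|\,d\gamma_{\alpha}\le\|g\|^{2}_{L^{2}_{\alpha}}\|f\|_{L^{2}_{\alpha}}\|h\|_{L^{2}_{\alpha}}<\infty$; performing the $s$-integration and invoking Parseval for $\mathcal{H}_{\alpha}$ twice gives the orthogonality relation. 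For (4), the two equalities defining $H_{g}$ are just the identity $W^{\alpha}_{g}(u)(k,s)=u\sharp_{\alpha}\overline{\mathcal{M}^{\alpha}_{s}g(k)}$ evaluated at $u=g^{\alpha}_{k',s'}$; applying (3) with $h=g^{\alpha}_{k',s'}$ gives $\int_{0}^{\infty}\!\int_{0}^{\infty}W^{\alpha}_{g}(f)(k,s)\overline{H_{g}(k',s';k,s)}\,d\nu_{\alpha}(k,s)=W^{\alpha}_{g}(f)(k',s')$, the reproducing property; since $W^{\alpha}_{g}$ is, by (2), an isometry up to the factor $\|g\|_{L^{2}_{\alpha}}$, its range is closed in $L^{2}_{\alpha}(\mathbb{R}_{+}\times\mathbb{R}_{+}^{*})$ and is therefore a reproducing kernel Hilbert space with kernel $H_{g}$; finally the pointwise bound follows from (1) applied to $W^{\alpha}_{g}(g^{\alpha}_{k',s'})$ together with $\|g^{\alpha}_{k',s'}\|_{L^{2}_{\alpha}}\le\|g\|_{L^{2}_{\alpha}}$, giving $|H_{g}(k',s';k,s)|\le\|g\|^{-2}_{L^{2}_{\alpha}}\|g^{\alpha}_{k',s'}\|_{L^{2}_{\alpha}}\|g\|_{L^{2}_{\alpha}}\le1$.

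The main obstacle I anticipate is not any single estimate but the bookkeeping around the function-space domains and the Tonelli/Fubini interchanges: $\mathcal{H}_{\alpha}$ may be applied to $W^{\alpha}_{g}(f)(\cdot,s)$ only once this function is known to belong to $L^{1}_{\alpha}$ or $L^{2}_{\alpha}$, and the a.e.\ membership $W^{\alpha}_{g}(f)(\cdot,s)\in L^{2}_{\alpha}(\mathbb{R}_{+})$ required for the Parseval step in (2)--(3) is precisely what the (a priori only formal) computation of (2) yields when read through Tonelli. Once those points are settled, everything else is a direct consequence of the listed properties of $\tau^{\alpha}_{k}$, $\mathcal{M}^{\alpha}_{s}$, the convolution $\sharp_{\alpha}$ and $\mathcal{H}_{\alpha}$.
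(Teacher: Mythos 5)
The paper itself offers no proof of this proposition: it is quoted from references [5,8] ("The windowed Hankel transform satisfies the following properties [5,8]"), so there is nothing internal to compare your argument against. That said, your reconstruction is correct and follows the standard route. Part (1) is exactly Cauchy--Schwarz in $L^{2}_{\alpha}(\mathbb{R}_{+})$ combined with the contraction property (9) of $\tau^{\alpha}_{k}$ and the isometry (14) of $\mathcal{M}^{\alpha}_{s}$; the key identity $\mathcal{H}_{\alpha}\big(W^{\alpha}_{g}(f)(\cdot,s)\big)=\mathcal{H}_{\alpha}(f)\sqrt{\tau^{\alpha}_{s}|\mathcal{H}_{\alpha}(g)|^{2}}$ is the right engine for (2) and (3), and your use of the symmetry (10) and translation invariance (11) to evaluate the $s$-integral, with Tonelli for the nonnegative case and an absolute-integrability bound for the polarised case, is exactly how this is done in the Hankel setting; (4) then follows formally from (3) and the bound from (1). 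Two points deserve the extra care you already flag: the product formula (12) is stated in the paper for the convolution of functions for which both sides make sense, and applying it with $f\in L^{2}_{\alpha}$ and $\overline{\mathcal{M}^{\alpha}_{s}g}\in L^{2}_{\alpha}$ requires the (standard, but not stated) extension in which $\mathcal{H}_{\alpha}(f)\,\psi_{s}\in L^{1}_{\alpha}$ and $W^{\alpha}_{g}(f)(\cdot,s)$ is recovered as its inverse transform; and the reality of $\mathcal{M}^{\alpha}_{s}g$, which you justify correctly from the realness of $j_{\alpha}$ on $\mathbb{R}$ and the self-inverse character of $\mathcal{H}_{\alpha}$, is needed to identify $\mathcal{H}_{\alpha}(\overline{\mathcal{M}^{\alpha}_{s}g})$ with $\psi_{s}$. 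With those caveats made explicit, the argument is complete and matches what the cited sources do.
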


\noindent{\bf 3. Uncertainty principle for the windowed Hankel transform}

In this section we obtain some uncertainty principles for the windowed Hankel transform.

We consider the following orthogonal projections [13,14]:

(1) $O_{g}$ is the orthogonal projection from $ L^{2}_{\alpha}(\mathbb{R_{+}}\times \mathbb{R_{+}^{*}})$ onto $W^{\alpha}_{g}(L^{2}_{\alpha}(\mathbb{R_{+}})$,
$R_{\phi}$ is its range.

(2)  $O_{E}$ is the orthogonal projection on $ L^{2}_{\alpha}(\mathbb{R_{+}}\times \mathbb{R_{+}^{*}})$ defined by
\begin{equation}
		\begin{split}
O_{E}F=\chi_{E}F, \ \ F\in L^{2}_{\alpha}(\mathbb{R_{+}}\times \mathbb{R_{+}^{*}})
		\end{split}
	\end{equation}
where $E\subset \mathbb{R_{+}}$, $R_{E}$ is its range.

We define $\|O_{E}O_{g}\|=sup\{\|O_{E}O_{g}(F)\|_{L^{2}_{\alpha}(\mathbb{R_{+}}\times \mathbb{R_{+}^{*}})}, \ F\in L^{2}_{\alpha}(\mathbb{R_{+}}\times \mathbb{R_{+}^{*}});
\|F\|_{L^{2}_{\alpha}(\mathbb{R_{+}}\times \mathbb{R_{+}^{*}})}=1\}$.

\begin{thm}
Let $g\in L^{2}_{\alpha}(\mathbb{R_{+}})$ be a non-zero window function. For any subset
$E\subset \mathbb{R_{+}}\times \mathbb{R_{+}^{*}}$ and finite measure $\nu_{\alpha}(E)<+\infty$, then $O_{E}O_{g}$ is a Hilbert-Schmid
operator and we have the following estimation:
\begin{equation}
		\begin{split}
\|O_{E}O_{g}\|^{2}\leq\nu_{\alpha}(E)
		\end{split}
	\end{equation}
\end{thm}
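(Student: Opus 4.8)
The plan is to realise the orthogonal projection $O_g$ as an integral operator whose kernel is the reproducing kernel $H_g$ of Proposition~1(4), and then to read off the Hilbert--Schmidt norm of $O_EO_g$ directly from that kernel, estimating it by $\nu_\alpha(E)$ via Plancherel's formula.

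First I would fix $(k,s)\in\mathbb{R}_+\times\mathbb{R}_+^*$ and note, via (23), that $H_g(k,s;\cdot,\cdot)=\|g\|^{-2}_{L^2_\alpha(\mathbb{R}_+)}\,W^\alpha_g(g^\alpha_{k,s})$ belongs to the subspace $R_\phi=W^\alpha_g(L^2_\alpha(\mathbb{R}_+))$, which is closed because by (20) the map $W^\alpha_g$ is a constant multiple of an isometry, and that moreover $\|H_g(k,s;\cdot,\cdot)\|_{L^2_\alpha(\mathbb{R}_+\times\mathbb{R}_+^*)}=\|g^\alpha_{k,s}\|_{L^2_\alpha(\mathbb{R}_+)}/\|g\|_{L^2_\alpha(\mathbb{R}_+)}$. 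Hence for every $F\in L^2_\alpha(\mathbb{R}_+\times\mathbb{R}_+^*)$ the integral
\begin{equation*}
PF(k,s):=\int_{0}^{+\infty}\int_{0}^{+\infty}F(k',s')\,\overline{H_g(k,s;k',s')}\,\mathrm{d}\nu_\alpha(k',s')=\big\langle F,\,H_g(k,s;\cdot,\cdot)\big\rangle_{L^2_\alpha(\mathbb{R}_+\times\mathbb{R}_+^*)}
\end{equation*}
converges absolutely by Cauchy--Schwarz. If $F\perp R_\phi$ then $PF\equiv0$; if $F=W^\alpha_g(f)$, then the orthogonality relation (21) (with integration variables $(k',s')$ and $h=g^\alpha_{k,s}$) together with the definition (17) gives
\begin{equation*}
PF(k,s)=\frac{1}{\|g\|^{2}_{L^2_\alpha(\mathbb{R}_+)}}\big\langle W^\alpha_g(f),W^\alpha_g(g^\alpha_{k,s})\big\rangle_{L^2_\alpha(\mathbb{R}_+\times\mathbb{R}_+^*)}=\big\langle f,g^\alpha_{k,s}\big\rangle_{L^2_\alpha(\mathbb{R}_+)}=W^\alpha_g(f)(k,s)=F(k,s).
\end{equation*}
Thus $P$ restricts to the identity on $R_\phi$ and annihilates $R_\phi^\perp$, so $P=O_g$, and consequently $O_EO_g$ is the integral operator with kernel $(k,s;k',s')\mapsto\chi_E(k,s)\,\overline{H_g(k,s;k',s')}$.

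Next, by Tonelli's theorem the Hilbert--Schmidt norm of this kernel is
\begin{equation*}
\|O_EO_g\|^2_{\mathrm{HS}}=\int_E\left(\int_{0}^{+\infty}\int_{0}^{+\infty}|H_g(k,s;k',s')|^2\,\mathrm{d}\nu_\alpha(k',s')\right)\mathrm{d}\nu_\alpha(k,s).
\end{equation*}
By the computation of $\|H_g(k,s;\cdot,\cdot)\|$ above, the inner integral equals $\|g^\alpha_{k,s}\|^2_{L^2_\alpha(\mathbb{R}_+)}/\|g\|^2_{L^2_\alpha(\mathbb{R}_+)}$; and since $g^\alpha_{k,s}=\tau^\alpha_k\mathcal{M}^\alpha_sg$, the contraction estimate (10) with $p=2$ and the norm identity (15) give $\|g^\alpha_{k,s}\|_{L^2_\alpha(\mathbb{R}_+)}\le\|g\|_{L^2_\alpha(\mathbb{R}_+)}$, so the inner integral is at most $1$. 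Therefore $\|O_EO_g\|^2_{\mathrm{HS}}\le\int_E\mathrm{d}\nu_\alpha(k,s)=\nu_\alpha(E)<+\infty$, which already shows $O_EO_g$ is a Hilbert--Schmidt operator; and since the operator norm never exceeds the Hilbert--Schmidt norm we conclude $\|O_EO_g\|^2\le\|O_EO_g\|^2_{\mathrm{HS}}\le\nu_\alpha(E)$.

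The step I anticipate as the main obstacle is establishing the identification $P=O_g$ rigorously — confirming that the integral operator against $\overline{H_g}$ is well defined, that $H_g(k,s;\cdot,\cdot)\in R_\phi$, that $R_\phi$ is closed, and justifying the interchange of integrations needed to apply the orthogonality relation (21) inside the integral. This is the standard bundle of reproducing-kernel facts, controlled here by the $L^2$-membership of $H_g(k,s;\cdot,\cdot)$ coming from (20), the pointwise bound $|H_g|\le1$ of (24), and the finiteness $\nu_\alpha(E)<+\infty$; once it is in place, the remainder is a one-line computation.
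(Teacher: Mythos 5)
Your proof is correct and follows essentially the same route as the paper: realize $O_g$ as the integral operator with the reproducing kernel $H_g$, compute the Hilbert--Schmidt norm, bound the inner integral by $\|g^{\alpha}_{k,s}\|^{2}_{L^{2}_{\alpha}(\mathbb{R_{+}})}/\|g\|^{2}_{L^{2}_{\alpha}(\mathbb{R_{+}})}\leq 1$ via Plancherel and the contraction property of $\tau^{\alpha}_{k}$, and finish with $\|\cdot\|\leq\|\cdot\|_{HS}$. The only difference is that you verify the kernel representation of $O_g$ from first principles where the paper simply cites Saitoh's reproducing-kernel theory, which is a harmless (indeed welcome) addition; just note that several of your equation references are off by one or two relative to the paper's numbering (e.g.\ the contraction estimate is (9), the orthogonality relation is (20), the kernel formula is (21)).
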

\begin{proof}
According to the paper \cite{[10]}, for every function $F\in L^{2}_{\alpha}(\mathbb{R_{+}}\times \mathbb{R_{+}^{*}})$, the orthogonal projection $O_{g}$
can be expressed as:
\begin{equation*}
		\begin{split}
O_{g}(F)(k,s)=\int^{+\infty}_{0}\int^{+\infty}_{0}F(k',s')H_{g}(k',s';k,s)\emph{d}\nu_{\alpha}(k',s')
		\end{split}
	\end{equation*}
where $H_{g}(k',s';k,s)$ is defined by (21). By (23), we get
\begin{equation*}
		\begin{split}
O_{E}O_{g}(F)(k,s)=\int^{+\infty}_{0}\int^{+\infty}_{0}\chi_{E}(k,s)F(k',s')H_{g}(k',s';k,s)\emph{d}\nu_{\alpha}(k',s')
		\end{split}
	\end{equation*}
Using (9), (15), (20) and Fubini's theorem, we have
\begin{equation}
		\begin{split}
\|O_{E}O_{g}\|^{2}_{HS}&=\int^{+\infty}_{0}\int^{+\infty}_{0}\int^{+\infty}_{0}\int^{+\infty}_{0}
|\chi_{E}(k,s)|^{2}|H_{g}(k',s';k,s)|^{2}\emph{d}\nu_{\alpha}(k',s')\emph{d}\nu_{\alpha}(k,s)\\
&=\int^{+\infty}_{0}\int^{+\infty}_{0}|\chi_{E}(k,s)|^{2}\left(\int^{+\infty}_{0}\int^{+\infty}_{0}
\left|\frac{1}{\|g\|^{2}_{L^{2}_{\alpha}(\mathbb{R_{+}})}}W^{\alpha}_{g}(g^{\alpha}_{k',s'})(k,s)\right|^{2}
\emph{d}\nu_{\alpha}(k',s')\right)\emph{d}\nu_{\alpha}(k,s)\\
&=\frac{1}{\|g\|^{2}_{L^{2}_{\alpha}(\mathbb{R_{+}})}}\int_{E}\int_{E}\left(\int^{+\infty}_{0}\int^{+\infty}_{0}
\frac{1}{\|g\|^{2}_{L^{2}_{\alpha}(\mathbb{R_{+}})}}\left|W^{\alpha}_{g}(g^{\alpha}_{k,s})(k',s')\right|^{2}
\emph{d}\nu_{\alpha}(k',s')\right)\emph{d}\nu_{\alpha}(k,s)\\
&\leq\frac{\|g\|^{2}_{L^{2}_{\alpha}(\mathbb{R_{+}})}}{\|g\|^{2}_{L^{2}_{\alpha}(\mathbb{R_{+}})}}\nu_{\alpha}(E)
=\nu_{\alpha}(E)
		\end{split}
	\end{equation}
According to $O_{E}O_{g}$ is an integral operator with Hilbert-Schmidt kernel, we have
$\|O_{E}O_{g}\|^{2}\leq\|O_{E}O_{g}\|^{2}_{HS}$. Which completes the proof.
\end{proof}
\begin{thm} (Uncertainty principle for orthonormal sequence)
Let $g\in L^{2}_{\alpha}(\mathbb{R_{+}})$ be a window function, $(\varphi_{n})_{n\in\mathbb{N}}$ be an orthonormal sequence in
 $L^{2}_{\alpha}(\mathbb{R_{+}})$ and any subset $E\subset \mathbb{R_{+}}\times \mathbb{R_{+}^{*}}$. If $\nu_{\alpha}(E)<+\infty$, then for every
 $N\in \mathbb{N^{*}}$, we have
\begin{equation}
		\begin{split}
\sum^{N}_{n=1}(1-\|\chi_{E^{c}}W^{\alpha}_{g}(\varphi_{n})\|_{L^{2}_{\alpha}(\mathbb{R_{+}}\times \mathbb{R_{+}^{*}})})
\leq \nu_{\alpha}(E)
		\end{split}
	\end{equation}
\end{thm}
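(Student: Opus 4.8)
The plan is to push the problem to the image (transform) side and then apply estimate (27) from Theorem 2. We may assume the customary normalization $\|g\|_{L^{2}_{\alpha}(\mathbb{R_{+}})}=1$ (otherwise divide $g$ by its norm). With this normalization, Plancherel's formula (20) gives $\|W^{\alpha}_{g}(\varphi_{n})\|_{L^{2}_{\alpha}(\mathbb{R_{+}}\times \mathbb{R_{+}^{*}})}=\|\varphi_{n}\|_{L^{2}_{\alpha}(\mathbb{R_{+}})}\|g\|_{L^{2}_{\alpha}(\mathbb{R_{+}})}=1$ for every $n$, and the orthogonality relation (21) gives $\langle W^{\alpha}_{g}(\varphi_{n}),W^{\alpha}_{g}(\varphi_{m})\rangle=\|g\|^{2}_{L^{2}_{\alpha}(\mathbb{R_{+}})}\langle\varphi_{n},\varphi_{m}\rangle=\delta_{nm}$. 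Hence $(W^{\alpha}_{g}(\varphi_{n}))_{n=1}^{N}$ is a finite orthonormal system in $L^{2}_{\alpha}(\mathbb{R_{+}}\times\mathbb{R_{+}^{*}})$, and each $W^{\alpha}_{g}(\varphi_{n})$ lies in $W^{\alpha}_{g}(L^{2}_{\alpha}(\mathbb{R_{+}}))$, the range of $O_{g}$, so that $O_{g}W^{\alpha}_{g}(\varphi_{n})=W^{\alpha}_{g}(\varphi_{n})$.

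Next I would rewrite the summand. Since $\chi_{E}+\chi_{E^{c}}=1$ on $\mathbb{R_{+}}\times\mathbb{R_{+}^{*}}$ and $\|W^{\alpha}_{g}(\varphi_{n})\|_{2}=1$, orthogonality of $\chi_{E}W^{\alpha}_{g}(\varphi_{n})$ and $\chi_{E^{c}}W^{\alpha}_{g}(\varphi_{n})$ yields $\|\chi_{E}W^{\alpha}_{g}(\varphi_{n})\|^{2}_{2}+\|\chi_{E^{c}}W^{\alpha}_{g}(\varphi_{n})\|^{2}_{2}=1$, so that $1-\|\chi_{E^{c}}W^{\alpha}_{g}(\varphi_{n})\|^{2}_{2}=\|\chi_{E}W^{\alpha}_{g}(\varphi_{n})\|^{2}_{2}=\|O_{E}O_{g}W^{\alpha}_{g}(\varphi_{n})\|^{2}_{2}$, the last equality using $O_{g}W^{\alpha}_{g}(\varphi_{n})=W^{\alpha}_{g}(\varphi_{n})$ together with $O_{E}F=\chi_{E}F$ from (23). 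Moreover, because $\|\chi_{E^{c}}W^{\alpha}_{g}(\varphi_{n})\|_{2}\le\|W^{\alpha}_{g}(\varphi_{n})\|_{2}=1$, we have $1-\|\chi_{E^{c}}W^{\alpha}_{g}(\varphi_{n})\|_{2}\le 1-\|\chi_{E^{c}}W^{\alpha}_{g}(\varphi_{n})\|^{2}_{2}$. Therefore it suffices to prove $\sum_{n=1}^{N}\|O_{E}O_{g}W^{\alpha}_{g}(\varphi_{n})\|^{2}_{2}\le\nu_{\alpha}(E)$.

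Finally I would invoke the elementary fact that for a Hilbert--Schmidt operator $A$ on a Hilbert space and any orthonormal system $(e_{n})_{n=1}^{N}$ one has $\sum_{n=1}^{N}\|Ae_{n}\|^{2}\le\|A\|^{2}_{HS}$ (write $\|Ae_{n}\|^{2}=\sum_{m}|\langle Ae_{n},f_{m}\rangle|^{2}$ for an orthonormal basis $(f_{m})$, sum over $n=1,\dots,N$, and compare with the full double sum $\|A\|^{2}_{HS}$). Theorem 2 guarantees that $A:=O_{E}O_{g}$ is Hilbert--Schmidt when $\nu_{\alpha}(E)<+\infty$, and estimate (27) gives $\|O_{E}O_{g}\|^{2}_{HS}\le\nu_{\alpha}(E)$. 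Applying the fact above with $e_{n}=W^{\alpha}_{g}(\varphi_{n})$ then yields $\sum_{n=1}^{N}\|O_{E}O_{g}W^{\alpha}_{g}(\varphi_{n})\|^{2}_{2}\le\nu_{\alpha}(E)$, and the claimed inequality follows via the comparison established in the previous paragraph.

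The only genuinely delicate points are: (i) verifying that $(W^{\alpha}_{g}(\varphi_{n}))_{n}$ is orthonormal and lies in the range of $O_{g}$ — immediate from Proposition 1, but essential, since it is precisely what lets $O_{g}$ act as the identity on these vectors and hence lets $\|O_{E}O_{g}\|_{HS}$ (rather than $\|O_{E}\|_{HS}$, which is infinite) control the sum; and (ii) the passage from the squared-norm inequality to the stated unsquared form, which is valid exactly because each $\|\chi_{E^{c}}W^{\alpha}_{g}(\varphi_{n})\|_{2}\le 1$. No convergence or compactness subtleties arise since $N$ is finite.
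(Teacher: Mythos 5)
Your proof is correct and takes essentially the same route as the paper's: both reduce the claim to $\sum_{n=1}^{N}\langle O_{E}W^{\alpha}_{g}(\varphi_{n}),W^{\alpha}_{g}(\varphi_{n})\rangle\leq\|O_{E}O_{g}\|^{2}_{HS}\leq\nu_{\alpha}(E)$ using the orthonormality of $(W^{\alpha}_{g}(\varphi_{n}))_{n}$ and the Hilbert--Schmidt bound from the preceding theorem, and your elementary inequality $\sum_{n\leq N}\|Ae_{n}\|^{2}\leq\|A\|^{2}_{HS}$ is exactly the paper's trace-class step $\sum_{n\leq N}\langle O_{g}O_{E}O_{g}e_{n},e_{n}\rangle\leq Tr(O_{g}O_{E}O_{g})$ with the trace language stripped away, while your $t^{2}\leq t$ unsquaring matches the paper's Cauchy--Schwarz step. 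The only remark worth recording is that the normalization $\|g\|_{L^{2}_{\alpha}(\mathbb{R_{+}})}=1$, which you present as a harmless reduction, is actually required for the statement itself (the inequality is not scale-invariant in $g$), and the paper uses it tacitly at the very same point, namely to make $(W^{\alpha}_{g}(\varphi_{n}))_{n}$ orthonormal.
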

\begin{proof}
Let $(e_{n})_{n\in\mathbb{N}}$ be an orthonormal basis of $L^{2}_{\alpha}(\mathbb{R_{+}})$, since $O_{E}O_{g}$ is an integral operator with Hilbert-Schmidt kernel and satisfy relation (25), hence the positive operator $O_{g}O_{E}O_{g}$ satisfies
\begin{equation*}
		\begin{split}
\sum_{n\in\mathbb{N}}\langle O_{g}O_{E}O_{g}e_{n},e_{n}\rangle_{L^{2}_{\alpha}(\mathbb{R_{+}}\times \mathbb{R_{+}^{*}})}=\|O_{E}O_{g}\|^{2}_{HS}\leq\nu_{\alpha}(E)<+\infty
		\end{split}
	\end{equation*}
where $\langle\cdot,\cdot\rangle$ is the inner product in $L^{2}_{\alpha}(\mathbb{R_{+}}\times \mathbb{R_{+}^{*}})$.

According to the paper \cite{[17]}, the positive operator $O_{g}O_{E}O_{g}$ is a trace class operator and
\begin{equation*}
		\begin{split}
Tr(O_{g}O_{E}O_{g})=\|O_{E}O_{g}\|^{2}_{HS}\leq\nu_{\alpha}(E)
		\end{split}
	\end{equation*}
Since $(\varphi_{n})_{n\in\mathbb{N}}$ be an orthonormal sequence in $L^{2}_{\alpha}(\mathbb{R_{+}})$, then by (20) we deduce
that $(W^{\alpha}_{g}(\varphi_{n}))_{n\in\mathbb{N}}$ is an orthonormal sequence in $L^{2}_{\alpha}(\mathbb{R_{+}}\times \mathbb{R_{+}^{*}})$, thus
\begin{equation*}
		\begin{split}
\sum^{N}_{n=1}\langle O_{E}W^{\alpha}_{g}(\varphi_{n}),W^{\alpha}_{g}(\varphi_{n})\rangle _{L^{2}_{\alpha}(\mathbb{R_{+}}\times \mathbb{R_{+}^{*}})}=
\sum^{N}_{n=1}\langle O_{g}O_{E}O_{g}W^{\alpha}_{g}(\varphi_{n}),W^{\alpha}_{g}(\varphi_{n})\rangle _{L^{2}_{\alpha}(\mathbb{R_{+}}\times \mathbb{R_{+}^{*}})}
\leq Tr(O_{g}O_{E}O_{g})
		\end{split}
	\end{equation*}
Hence we obtain
\begin{equation*}
		\begin{split}
\sum^{N}_{n=1}\langle O_{E}W^{\alpha}_{g}(\varphi_{n}),W^{\alpha}_{g}(\varphi_{n})\rangle _{L^{2}_{\alpha}(\mathbb{R_{+}}\times \mathbb{R_{+}^{*}})}
\leq \nu_{\alpha}(E)
		\end{split}
	\end{equation*}
On the other hand, by Cauchy-Schwarz inequality, we have for every $n, 1\leq n\leq N$,
\begin{equation*}
		\begin{split}
\langle O_{E}W^{\alpha}_{g}(\varphi_{n}),W^{\alpha}_{g}(\varphi_{n})\rangle _{L^{2}_{\alpha}(\mathbb{R_{+}}\times \mathbb{R_{+}^{*}})}=
1-\langle O_{E^{c}}W^{\alpha}_{g}(\varphi_{n}),W^{\alpha}_{g}(\varphi_{n})\rangle _{L^{2}_{\alpha}(\mathbb{R_{+}}\times \mathbb{R_{+}^{*}})}
\geq1-\|\chi_{E^{c}}W^{\alpha}_{g}(\varphi_{n})\|_{L^{2}_{\alpha}(\mathbb{R_{+}}\times \mathbb{R_{+}^{*}})}
		\end{split}
	\end{equation*}
Thus, we obtain
\begin{equation*}
		\begin{split}
\sum^{N}_{n=1}(1-\|\chi_{E^{c}}W^{\alpha}_{g}(\varphi_{n})\|_{L^{2}_{\alpha}(\mathbb{R_{+}}\times \mathbb{R_{+}^{*}})})\leq
\sum^{N}_{n=1}\langle O_{E}W^{\alpha}_{g}(\varphi_{n}),W^{\alpha}_{g}(\varphi_{n})\rangle _{L^{2}_{\alpha}(\mathbb{R_{+}}\times \mathbb{R_{+}^{*}})}
\leq \nu_{\alpha}(E)
		\end{split}
	\end{equation*}
\end{proof}
\begin{defn}
Let $\delta>0$ and $E\subset\mathbb{R^{*}_{+}\times\mathbb{R_{+}}}$ be a measurable subset. Let
$f, g\in L^{2}_{\alpha}(\mathbb{R_{+}})$ be two non-zero functions. We say that $W^{\alpha}_{g}(f)$
is $\delta$-time-frequency concentrated on $E$, if
\begin{equation*}
		\begin{split}
\|\chi_{E^{c}}W^{\alpha}_{g}(f)\|_{L^{2}_{\alpha}(\mathbb{R_{+}}\times \mathbb{R_{+}^{*}})}\leq\delta
\|W^{\alpha}_{g}(f)\|_{L^{2}_{\alpha}(\mathbb{R_{+}}\times \mathbb{R_{+}^{*}})}
		\end{split}
	\end{equation*}
\end{defn}
According to Theorem 2, we obtain the following result.
\begin{pro}
Let $0<\delta<1, r>0$ and $B_{r}=\{(k,s)\in \mathbb{R^{*}_{+}\times\mathbb{R_{+}}}\mid \ \mid(k,s)\mid\leq r\}$.
Let $g\in L^{2}_{\alpha}(\mathbb{R_{+}})$ be a window function, $(\varphi_{n})_{1\leq n\leq N}$ be an orthonormal sequence in $L^{2}_{\alpha}(\mathbb{R_{+}})$. If $W^{\alpha}_{g}(\varphi_{n})$ is $\delta$-time-frequency concentrated in the ball $B_{r}$ for every $1\leq n\leq N$, then
\begin{equation}
		\begin{split}
N
\leq \frac{r^{4(\alpha+1)}}{2^{2(\alpha+1)}\Gamma(2\alpha+3)(1-\delta)}
		\end{split}
	\end{equation}

\end{pro}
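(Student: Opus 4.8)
The plan is to combine Theorem~2 (the uncertainty principle for an orthonormal sequence) with an explicit computation of the measure $\nu_{\alpha}(B_{r})$. First I would apply the $\delta$-concentration hypothesis: by Definition~1, since each $\varphi_{n}$ is a unit vector and $\|W^{\alpha}_{g}(\varphi_{n})\|_{L^{2}_{\alpha}} = \|\varphi_{n}\|_{L^{2}_{\alpha}}\|g\|_{L^{2}_{\alpha}} = \|g\|_{L^{2}_{\alpha}}$ by the Plancherel formula (20), the concentration inequality gives $\|\chi_{B_{r}^{c}}W^{\alpha}_{g}(\varphi_{n})\|_{L^{2}_{\alpha}} \leq \delta\|g\|_{L^{2}_{\alpha}}$. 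Normalizing $g$ so that $\|g\|_{L^{2}_{\alpha}} = 1$ (which is harmless since the statement is scale-invariant in $g$), this reads $\|\chi_{B_{r}^{c}}W^{\alpha}_{g}(\varphi_{n})\|_{L^{2}_{\alpha}} \leq \delta$, hence $1 - \|\chi_{B_{r}^{c}}W^{\alpha}_{g}(\varphi_{n})\|_{L^{2}_{\alpha}} \geq 1-\delta$ for each $n$.

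Next I would invoke Theorem~2 with $E = B_{r}$, which has finite measure. Summing the lower bound $1-\delta$ over $n = 1,\dots,N$ and using (27) yields
\begin{equation*}
		\begin{split}
N(1-\delta) \leq \sum^{N}_{n=1}\left(1 - \|\chi_{B_{r}^{c}}W^{\alpha}_{g}(\varphi_{n})\|_{L^{2}_{\alpha}(\mathbb{R_{+}}\times \mathbb{R_{+}^{*}})}\right) \leq \nu_{\alpha}(B_{r}).
		\end{split}
	\end{equation*}
So it remains only to compute $\nu_{\alpha}(B_{r})$, and then divide by $1-\delta$.

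The remaining step is the explicit evaluation of $\nu_{\alpha}(B_{r})$. Since $\mathrm{d}\nu_{\alpha}(k,s) = \mathrm{d}\gamma_{\alpha}(k)\,\mathrm{d}\gamma_{\alpha}(s)$ with $\mathrm{d}\gamma_{\alpha}(t) = \frac{t^{2\alpha+1}}{2^{\alpha}\Gamma(\alpha+1)}\mathrm{d}t$, and $B_{r}$ is the quarter-disc $\{(k,s): k^{2}+s^{2}\leq r^{2}\}$ in the first quadrant, I would pass to polar coordinates $k = \rho\cos\theta$, $s = \rho\sin\theta$ with $\rho\in[0,r]$, $\theta\in[0,\pi/2]$. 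This gives
\begin{equation*}
		\begin{split}
\nu_{\alpha}(B_{r}) = \frac{1}{2^{2\alpha}\Gamma(\alpha+1)^{2}}\int_{0}^{\pi/2}(\cos\theta)^{2\alpha+1}(\sin\theta)^{2\alpha+1}\,\mathrm{d}\theta \int_{0}^{r}\rho^{4\alpha+3}\,\mathrm{d}\rho.
		\end{split}
	\end{equation*}
The radial integral is $r^{4\alpha+4}/(4\alpha+4) = r^{4(\alpha+1)}/(4(\alpha+1))$, and the angular integral is a Beta integral: $\int_{0}^{\pi/2}(\cos\theta)^{2\alpha+1}(\sin\theta)^{2\alpha+1}\mathrm{d}\theta = \frac{1}{2}B(\alpha+1,\alpha+1) = \frac{\Gamma(\alpha+1)^{2}}{2\Gamma(2\alpha+2)}$. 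Multiplying everything out, the $\Gamma(\alpha+1)^{2}$ cancels and, using $(2\alpha+2)\Gamma(2\alpha+2) = \Gamma(2\alpha+3)$ together with $4(\alpha+1)\cdot 2(\alpha+1)\cdot\frac{1}{2\alpha+2}$ bookkeeping, one arrives at $\nu_{\alpha}(B_{r}) = \dfrac{r^{4(\alpha+1)}}{2^{2(\alpha+1)}\Gamma(2\alpha+3)}$. Combining with $N(1-\delta)\leq\nu_{\alpha}(B_{r})$ gives (29).

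I do not anticipate a genuine obstacle here — the argument is a direct specialization of Theorem~2. The only place requiring care is the constant-chasing in the computation of $\nu_{\alpha}(B_{r})$: one must track the powers of $2$ and the Gamma-function identities correctly (in particular the Legendre-type simplification $4(\alpha+1)(2\alpha+2) = 2^{2}(\alpha+1)(2\alpha+2)$ and $\Gamma(2\alpha+3) = (2\alpha+2)\Gamma(2\alpha+2)$) so that the final denominator comes out as $2^{2(\alpha+1)}\Gamma(2\alpha+3)$ rather than an equivalent-looking but differently normalized expression. A secondary point worth stating explicitly is the reduction to $\|g\|_{L^{2}_{\alpha}} = 1$, or equivalently keeping $\|g\|_{L^{2}_{\alpha}}^{2}$ as a factor that cancels between numerator and denominator when applying Definition~1; either way the window norm drops out, as it must for the bound to be meaningful.
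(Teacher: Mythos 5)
Your proposal is correct and follows essentially the same route as the paper: apply Theorem~2 with $E=B_{r}$, use the $\delta$-concentration hypothesis to bound each summand below by $1-\delta$, and evaluate $\nu_{\alpha}(B_{r})=r^{4(\alpha+1)}/\bigl(2^{2(\alpha+1)}\Gamma(2\alpha+3)\bigr)$. The only differences are that you carry out the polar-coordinate/Beta-function computation of $\nu_{\alpha}(B_{r})$ explicitly (the paper merely states the value, and your constant checks out) and that you make explicit the normalization $\|g\|_{L^{2}_{\alpha}}=1$ implicit in the paper's use of Definition~1.
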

\begin{proof}
Using Theorem 2, we have
\begin{equation}
		\begin{split}
\sum^{N}_{n=1}(1-\|\chi_{B_{r}^{c}}W^{\alpha}_{g}(\varphi_{n})\|_{L^{2}_{\alpha}(\mathbb{R_{+}}\times \mathbb{R_{+}^{*}})})
\leq \nu_{\alpha}(B_{r})
		\end{split}
	\end{equation}
For every $1\leq n\leq N$, we obtain
\begin{equation*}
		\begin{split}
\|\chi_{B_{r}^{c}}W^{\alpha}_{g}(f)\|_{L^{2}_{\alpha}(\mathbb{R_{+}}\times \mathbb{R_{+}^{*}})}\leq\delta
		\end{split}
	\end{equation*}
and
\begin{equation*}
		\begin{split}
\nu_{\alpha}(B_{r})=\frac{r^{4(\alpha+1)}}{2^{2(\alpha+1)}\Gamma(2\alpha+3)}
		\end{split}
	\end{equation*}
Hence
\begin{equation*}
		\begin{split}
N(1-\delta)
\leq \frac{r^{4(\alpha+1)}}{2^{2(\alpha+1)}\Gamma(2\alpha+3)}
		\end{split}
	\end{equation*}
\end{proof}
\begin{defn}
Let $g\in L^{2}_{\alpha}(\mathbb{R_{+}})$ be a window function and $f\in L^{2}_{\alpha}(\mathbb{R_{+}})$, we define
the generalized $p$th time-frequency dispersion of the windowed Hankel transform by
\begin{equation*}
		\begin{split}
\rho_{p}(W^{\alpha}_{g}(f))=\left(\int^{+\infty}_{0}\int^{+\infty}_{0}
\mid(k,s)\mid^{p}|W^{\alpha}_{g}(f)(k,s)|^{2}\emph{d}\nu_{\alpha}(k,s)\right)^{1/p}
		\end{split}
	\end{equation*}
where $p>0$ and $\mid(k,s)\mid=\sqrt{k^{2}+s^{2}}$.

The time dispersion of the windowed Hankel transform is defined by
\begin{equation*}
		\begin{split}
\rho_{k,p}(W^{\alpha}_{g}(f))=\left(\int^{+\infty}_{0}\int^{+\infty}_{0}
\mid k\mid^{p}|W^{\alpha}_{g}(f)(k,s)|^{2}\emph{d}\nu_{\alpha}(k,s)\right)^{1/p}
		\end{split}
	\end{equation*}

The frequency dispersion of the windowed Hankel transform is defined by
\begin{equation*}
		\begin{split}
\rho_{s,p}(W^{\alpha}_{g}(f))=\left(\int^{+\infty}_{0}\int^{+\infty}_{0}
\mid s\mid^{p}|W^{\alpha}_{g}(f)(k,s)|^{2}\emph{d}\nu_{\alpha}(k,s)\right)^{1/p}
		\end{split}
	\end{equation*}
\end{defn}
\begin{corollary}
Let $g\in L^{2}_{\alpha}(\mathbb{R_{+}})$ be a window function,
$(\varphi_{n})_{1\leq n\leq N}$ be an orthonormal sequence in $L^{2}_{\alpha}(\mathbb{R_{+}})$ and $p>0$.
Fix $Y>0$, if the sequence $\rho_{p}(W^{\alpha}_{g}(\varphi_{n}))\leq Y$ for every $1\leq n\leq N$, then
\begin{equation*}
		\begin{split}
N
\leq \frac{2^{\frac{8}{p}(\alpha+1)-2\alpha-1}Y^{4(\alpha+1)}}{\Gamma(2\alpha+3)}
		\end{split}
	\end{equation*}
\end{corollary}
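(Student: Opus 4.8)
The plan is to deduce this corollary from Proposition 2 via a Chebyshev--Markov truncation. The hypothesis $\rho_{p}(W^{\alpha}_{g}(\varphi_{n}))\leq Y$ controls the $p$th moment of $|W^{\alpha}_{g}(\varphi_{n})|^{2}$ against $d\nu_{\alpha}$, and such a moment bound forces $W^{\alpha}_{g}(\varphi_{n})$ to be concentrated on a ball $B_{r}$ whose radius scales like $Y$. Once the $\delta$-time-frequency concentration of each $W^{\alpha}_{g}(\varphi_{n})$ on such a ball is in hand, Proposition 2 bounds $N$ in terms of $\nu_{\alpha}(B_{r})$, and it only remains to substitute the parameters and collect powers of $2$.

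First I would fix $r>0$ and split the integral defining $\rho_{p}$ over $B_{r}$ and its complement. On $B_{r}^{c}$ one has $\mid(k,s)\mid>r$, hence $\mid(k,s)\mid^{p}\geq r^{p}$ since $p>0$, so
\begin{equation*}
r^{p}\int_{B_{r}^{c}}|W^{\alpha}_{g}(\varphi_{n})(k,s)|^{2}\,d\nu_{\alpha}(k,s)\leq\int^{+\infty}_{0}\int^{+\infty}_{0}\mid(k,s)\mid^{p}|W^{\alpha}_{g}(\varphi_{n})(k,s)|^{2}\,d\nu_{\alpha}(k,s)=\rho_{p}(W^{\alpha}_{g}(\varphi_{n}))^{p}\leq Y^{p}.
\end{equation*}
Therefore $\|\chi_{B_{r}^{c}}W^{\alpha}_{g}(\varphi_{n})\|_{L^{2}_{\alpha}(\mathbb{R_{+}}\times\mathbb{R_{+}^{*}})}^{2}\leq (Y/r)^{p}$ for every $1\leq n\leq N$.

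Next I would choose the radius $r=2^{2/p}Y$, so that $(Y/r)^{p}=1/4$ and hence $\|\chi_{B_{r}^{c}}W^{\alpha}_{g}(\varphi_{n})\|_{L^{2}_{\alpha}(\mathbb{R_{+}}\times\mathbb{R_{+}^{*}})}\leq 1/2$ for every $n$; that is, each $W^{\alpha}_{g}(\varphi_{n})$ is $\tfrac12$-time-frequency concentrated on $B_{r}$, using as in the proof of Proposition 2 that the window is normalized so that $\|W^{\alpha}_{g}(\varphi_{n})\|_{L^{2}_{\alpha}(\mathbb{R_{+}}\times\mathbb{R_{+}^{*}})}=\|g\|_{L^{2}_{\alpha}(\mathbb{R_{+}})}\|\varphi_{n}\|_{L^{2}_{\alpha}(\mathbb{R_{+}})}=1$ by Plancherel's formula (20). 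Applying Proposition 2 with $\delta=1/2$ then gives, together with the value $\nu_{\alpha}(B_{r})=r^{4(\alpha+1)}/(2^{2(\alpha+1)}\Gamma(2\alpha+3))$ recorded there,
\begin{equation*}
N\leq\frac{r^{4(\alpha+1)}}{2^{2(\alpha+1)}\Gamma(2\alpha+3)\,(1-\tfrac12)}=\frac{2\,(2^{2/p}Y)^{4(\alpha+1)}}{2^{2(\alpha+1)}\Gamma(2\alpha+3)}=\frac{2^{\frac{8}{p}(\alpha+1)-2\alpha-1}\,Y^{4(\alpha+1)}}{\Gamma(2\alpha+3)},
\end{equation*}
where the last step uses $1-2(\alpha+1)=-2\alpha-1$.

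There is essentially no deep obstacle here: all the substance is contained in Proposition 2, and what remains is the Markov estimate plus exponent bookkeeping. The only point requiring care is the normalization convention for the window, i.e.\ the precise reading of ``$\delta$-time-frequency concentrated'' when $\|g\|_{L^{2}_{\alpha}(\mathbb{R_{+}})}\neq 1$, which must be kept consistent with Definition 2 and the proof of Proposition 2. One could in principle optimize over $\delta\in(0,1)$ rather than fixing $\delta=\tfrac12$ to improve the numerical constant, but the choice $\delta=\tfrac12$ already reproduces the stated bound exactly.
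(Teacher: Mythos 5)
Your proposal is correct and follows essentially the same route as the paper: the same Chebyshev--Markov estimate $\|\chi_{B_{r}^{c}}W^{\alpha}_{g}(\varphi_{n})\|^{2}\leq (Y/r)^{p}$, the same choice of radius ($2^{2/p}Y=4^{1/p}Y$), and the same application of Proposition 2 with $\delta=\tfrac12$. Your remark about the normalization implicit in Definition 2 is a fair point of care that the paper glosses over, but it does not change the argument.
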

\begin{proof}
Since, for every $r>0$, we have
\begin{equation*}
		\begin{split}
\|\chi_{B_{r}^{c}}W^{\alpha}_{g}(\varphi_{n})\|^{2}_{L^{2}_{\alpha}(\mathbb{R_{+}}\times \mathbb{R_{+}^{*}})}&=
\int_{\mid(k,s)\mid\geq r}\int_{|(k,s)|\geq r}|(k,s)|^{-p}|(k,s)|^{p}|W^{\alpha}_{g}(\varphi_{n})(k,s)|^{2}\emph{d}\nu_{\alpha}(k,s)\\
&\leq r^{-p}\rho^{p}_{p}(W^{\alpha}_{g}(\varphi_{n}))\leq r^{-p}Y^{p}
		\end{split}
	\end{equation*}
if $r=4^{1/p}Y$, we have
\begin{equation*}
		\begin{split}
\|\chi_{B_{(4^{1/p}Y)}^{c}}W^{\alpha}_{g}(\varphi_{n})\|^{2}_{L^{2}_{\alpha}(\mathbb{R_{+}}\times \mathbb{R_{+}^{*}})}\leq \frac{1}{4}
		\end{split}
	\end{equation*}

Hence for every $1\leq n\leq N$, $W^{\alpha}_{g}(\varphi_{n})$ is $\frac{1}{2}$-concentrated in the ball $B_{4^{1/p}Y}$, applying Proposition 2,
we obtain
\begin{equation*}
		\begin{split}
N
\leq \frac{2^{\frac{8}{p}(\alpha+1)-2\alpha-1}Y^{4(\alpha+1)}}{\Gamma(2\alpha+3)}
		\end{split}
	\end{equation*}
\end{proof}
\begin{lem}
Let $g\in L^{2}_{\alpha}(\mathbb{R_{+}})$ be a window function,
$(\varphi_{n})_{1\leq n\leq N}$ be an orthonormal sequence in $L^{2}_{\alpha}(\mathbb{R_{+}})$ and $p>0$.
then there exits $i_{0}\in\mathbb{Z}$ such that
\begin{equation}
		\begin{split}
\rho_{p}(W^{\alpha}_{g}(\varphi_{n}))\geq2^{i_{0}}. \ \ \ \forall n\in\mathbb{N}
		\end{split}
	\end{equation}
\end{lem}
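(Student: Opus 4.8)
The plan is to reduce the statement to the single fact that $m:=\inf_{n}\rho_{p}(W^{\alpha}_{g}(\varphi_{n}))>0$: once this is known, choose any integer $i_{0}$ with $2^{i_{0}}\leq m$ (possible precisely because $m>0$), and then $\rho_{p}(W^{\alpha}_{g}(\varphi_{n}))\geq m\geq 2^{i_{0}}$ for every $n$, which is the asserted inequality. I would start with two elementary remarks. Since $\|\varphi_{n}\|_{L^{2}_{\alpha}(\mathbb{R_{+}})}=1$, Plancherel's formula for the windowed Hankel transform gives $\|W^{\alpha}_{g}(\varphi_{n})\|_{L^{2}_{\alpha}(\mathbb{R_{+}}\times\mathbb{R_{+}^{*}})}=\|g\|_{L^{2}_{\alpha}(\mathbb{R_{+}})}\neq 0$, so $W^{\alpha}_{g}(\varphi_{n})$ is nonzero and, $|(k,s)|^{p}$ being positive off a $\nu_{\alpha}$-null set, $\rho_{p}(W^{\alpha}_{g}(\varphi_{n}))>0$ for every $n$. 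If the orthonormal family is finite this already proves the lemma, $m$ then being a minimum of finitely many strictly positive numbers; the substantive case is that of an infinite orthonormal sequence, treated next.

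Assume $m=0$ and extract a subsequence with $\rho_{p}(W^{\alpha}_{g}(\varphi_{n_{k}}))\to 0$. Fix any $r>0$. The ball $B_{r}=\{(k,s):|(k,s)|\leq r\}$ has finite measure $\nu_{\alpha}(B_{r})$, so by Theorem 1 the operator $O_{B_{r}}O_{g}$ is Hilbert--Schmidt; running the computation in the proof of Theorem 2 (and stopping before the passage to the square root) yields, for all $N$,
\[
\sum_{n=1}^{N}\big\|\chi_{B_{r}}W^{\alpha}_{g}(\varphi_{n})\big\|^{2}_{L^{2}_{\alpha}(\mathbb{R_{+}}\times\mathbb{R_{+}^{*}})}\leq\|g\|^{2}_{L^{2}_{\alpha}(\mathbb{R_{+}})}\,\nu_{\alpha}(B_{r}),
\]
so the full series $\sum_{n\geq1}\|\chi_{B_{r}}W^{\alpha}_{g}(\varphi_{n})\|^{2}_{L^{2}_{\alpha}}$ converges. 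Meanwhile Chebyshev's inequality (exactly as in the proof of Corollary 1) gives $\|\chi_{B_{r}^{c}}W^{\alpha}_{g}(\varphi_{n})\|^{2}_{L^{2}_{\alpha}}\leq r^{-p}\rho_{p}(W^{\alpha}_{g}(\varphi_{n}))^{p}$, hence
\[
\big\|\chi_{B_{r}}W^{\alpha}_{g}(\varphi_{n})\big\|^{2}_{L^{2}_{\alpha}}=\|g\|^{2}_{L^{2}_{\alpha}}-\big\|\chi_{B_{r}^{c}}W^{\alpha}_{g}(\varphi_{n})\big\|^{2}_{L^{2}_{\alpha}}\geq\|g\|^{2}_{L^{2}_{\alpha}}-r^{-p}\rho_{p}(W^{\alpha}_{g}(\varphi_{n}))^{p}.
\]
Along the chosen subsequence the right-hand side tends to $\|g\|^{2}_{L^{2}_{\alpha}}>0$, so infinitely many terms of a convergent series exceed $\tfrac12\|g\|^{2}_{L^{2}_{\alpha}}$, which is absurd. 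Therefore $m>0$, and the lemma follows as explained above.

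The step I expect to be the real (if modest) obstacle is the first displayed inequality: it rests on the fact that each $W^{\alpha}_{g}(\varphi_{n})$ lies in the range of the projection $O_{g}$ — which is exactly what lets one replace $O_{B_{r}}O_{g}W^{\alpha}_{g}(\varphi_{n})$ by $\chi_{B_{r}}W^{\alpha}_{g}(\varphi_{n})$ — combined with the orthogonality relation of Proposition 1 (so that $(W^{\alpha}_{g}(\varphi_{n})/\|g\|_{L^{2}_{\alpha}})_{n}$ is orthonormal), whence the Hilbert--Schmidt bound $\|O_{B_{r}}O_{g}\|^{2}_{HS}\leq\nu_{\alpha}(B_{r})$ of Theorem 1 dominates $\sum_{n}\|\chi_{B_{r}}W^{\alpha}_{g}(\varphi_{n})\|^{2}_{L^{2}_{\alpha}}/\|g\|^{2}_{L^{2}_{\alpha}}$. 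Everything else is a routine application of Plancherel's formula, Chebyshev's inequality, and Theorems 1--2.
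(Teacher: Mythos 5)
Your proof is correct, but it takes a genuinely different route from the paper's. The paper defines the dyadic classes $P_{i}=\{n:\rho_{p}(W^{\alpha}_{g}(\varphi_{n}))\in[2^{i-1},2^{i})\}$, observes via Chebyshev that every $n\in P_{i}$ gives a $W^{\alpha}_{g}(\varphi_{n})$ that is $\tfrac12$-concentrated in $B_{2^{i+2/p}}$, and then invokes Proposition 2 to get the counting bound $N_{i}\leq C\,2^{4(\alpha+1)i}$; since this bound drops below $1$ as $i\to-\infty$, the classes $P_{i}$ are empty for all sufficiently negative $i$, which yields $i_{0}$. You instead argue by contradiction directly from Theorem 1: if $\inf_{n}\rho_{p}(W^{\alpha}_{g}(\varphi_{n}))=0$, a subsequence concentrates almost all of its (constant) $L^{2}$-mass $\|g\|^{2}_{L^{2}_{\alpha}(\mathbb{R_{+}})}$ in a fixed ball $B_{r}$, contradicting the Bessel-type summability $\sum_{n}\|\chi_{B_{r}}W^{\alpha}_{g}(\varphi_{n})\|^{2}\leq\|g\|^{2}_{L^{2}_{\alpha}(\mathbb{R_{+}})}\nu_{\alpha}(B_{r})$ that follows from $\|O_{B_{r}}O_{g}\|^{2}_{HS}\leq\nu_{\alpha}(B_{r})$ applied to the orthonormal system $(W^{\alpha}_{g}(\varphi_{n})/\|g\|_{L^{2}_{\alpha}(\mathbb{R_{+}})})_{n}$. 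Both arguments ultimately rest on the Hilbert--Schmidt estimate of Theorem 1, but yours short-circuits Proposition 2 entirely and is arguably cleaner; it even sidesteps the paper's slightly glossed step (``we see that $P_{i}$ is empty for all $i<i_{0}$''), which really needs the remark that $C\,2^{4(\alpha+1)i}<1$ for $i$ sufficiently negative. What you lose is the quantitative bookkeeping: the sets $P_{i}$ and the explicit bound on $N_{i}$ are reused verbatim in the proof of Shapiro's dispersion theorem (Theorem 3), so the paper's version of the lemma doubles as setup for that result, whereas yours establishes only the qualitative existence of $i_{0}$ that the lemma literally asserts.
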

\begin{proof}
For each $i\in\mathbb{Z}$, we define
\begin{equation*}
		\begin{split}
P_{i}=\{n\in\mathbb{N}|\rho_{p}(W^{\alpha}_{g}(\varphi_{n}))\in[2^{i-1},2^{i})\}
		\end{split}
	\end{equation*}
then for every $n\in P_{i}$, we have $\rho_{p}(W^{\alpha}_{g}(\varphi_{n}))\leq2^{i}$.

Hence
\begin{equation*}
		\begin{split}
\int_{\mid(k,s)\mid\geq 2^{i+2/p}}\int_{|(k,s)|\geq 2^{i+2/p}}|W^{\alpha}_{g}(\varphi_{n})(k,s)|^{2}\emph{d}\nu_{\alpha}(k,s)\leq\frac{1}{4}
\end{split}
	\end{equation*}

For every $n\in P_{i}$, $W^{\alpha}_{g}(\varphi_{n})$ is $\frac{1}{2}$-concentrated in the ball $B_{2^{i+2/p}}$,
according to Proposition 2, we have $P_{i}$ is finite and
\begin{equation}
		\begin{split}
N_{i}
\leq \frac{2^{\frac{8}{p}(\alpha+1)-2\alpha-1}}{\Gamma(2\alpha+3)}2^{4(\alpha+1)i}
		\end{split}
	\end{equation}
where $N_{i}$ is the number of elements in $P_{i}$. We see that $P_{i}$ is empty for all $i<i_{0}$, so
$\rho_{p}(W^{\alpha}_{g}(\varphi_{n}))\geq2^{i_{0}}$.
\end{proof}
\begin{thm} (Shapiro's dispersion theorem for the windowed Hankel
transform)
Let $g\in L^{2}_{\alpha}(\mathbb{R_{+}})$ be a window function and
$(\varphi_{n})_{1\leq n\leq N}$ be an orthonormal sequence in $L^{2}_{\alpha}(\mathbb{R_{+}})$, then for
 $p>0$ and $N\in\mathbb{N}$, we have
\begin{equation*}
		\begin{split}
\sum^{N}_{n=1}\rho^{p}_{p}(W^{\alpha}_{g}(\varphi_{n}))\geq
N^{1+\frac{p}{4(\alpha+1)}}\left( \frac{3\Gamma(2\alpha+3)}{2^{\frac{8}{p}(\alpha+1)+6\alpha+8}}\right)^{\frac{p}{4(\alpha+1)}}
		\end{split}
	\end{equation*}
\end{thm}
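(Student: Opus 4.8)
The plan is to recycle the dyadic decomposition already used in the proof of Lemma 1, and then reduce the theorem to a purely combinatorial extremal problem for which convexity gives the power $N^{1+\frac{p}{4(\alpha+1)}}$. Fix $p>0$, keep the normalisation $\|g\|_{L^{2}_{\alpha}(\mathbb{R_{+}})}=1$ in force (as in Proposition 2 and Lemma 1), abbreviate $d=4(\alpha+1)$ and $C=\frac{2^{\frac{8}{p}(\alpha+1)-2\alpha-1}}{\Gamma(2\alpha+3)}$, and for $i\in\mathbb{Z}$ put $P_{i}=\{\,n:1\le n\le N,\ \rho_{p}(W^{\alpha}_{g}(\varphi_{n}))\in[2^{i-1},2^{i})\,\}$ with $N_{i}=\#P_{i}$. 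Exactly as in the proof of Lemma 1, $n\in P_{i}$ makes $W^{\alpha}_{g}(\varphi_{n})$ be $\tfrac12$-time-frequency concentrated on $B_{2^{i+2/p}}$, so Proposition 2 yields the counting bound $N_{i}\le C\,2^{di}$; moreover $\sum_{i}N_{i}=N$ by orthonormality, and $n\in P_{i}$ forces $\rho_{p}^{p}(W^{\alpha}_{g}(\varphi_{n}))\ge 2^{p(i-1)}$. Hence $\sum_{n=1}^{N}\rho_{p}^{p}(W^{\alpha}_{g}(\varphi_{n}))\ge\sum_{i}N_{i}\,2^{p(i-1)}$.

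Next I would bound the right-hand side below by the solution of the extremal problem: let $\mathcal{S}(N)$ be the infimum of $\sum_{i\in\mathbb{Z}}a_{i}\,2^{p(i-1)}$ over all sequences $(a_{i})_{i\in\mathbb{Z}}$ with $0\le a_{i}\le C\,2^{di}$ and $\sum_{i}a_{i}=N$; since $(N_{i})$ is feasible, $\sum_{n=1}^{N}\rho_{p}^{p}(W^{\alpha}_{g}(\varphi_{n}))\ge\mathcal{S}(N)$. Because the weights $2^{p(i-1)}$ increase in $i$ while the caps $C\,2^{di}$ shrink as $i\to-\infty$, an exchange argument shows the infimum is attained by the greedy sequence that saturates $a_{i}=C\,2^{di}$ for all $i\le m-1$ and puts the residual mass in level $m$, where $m=m(N)$ is the unique integer with $\frac{C\,2^{dm}}{2^{d}-1}\le N<\frac{C\,2^{d(m+1)}}{2^{d}-1}$ (using the geometric series $\sum_{i\le m-1}C\,2^{di}=\frac{C\,2^{dm}}{2^{d}-1}$). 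On each window $\bigl[\frac{C\,2^{dm}}{2^{d}-1},\frac{C\,2^{d(m+1)}}{2^{d}-1}\bigr]$ the function $\mathcal{S}$ is affine in $N$, and at the breakpoints $N^{\ast}_{m}:=\frac{C\,2^{dm}}{2^{d}-1}$ the residual mass vanishes, so a second geometric summation and the identity $2^{(d+p)m}=(2^{dm})^{1+p/d}$ give $\mathcal{S}(N^{\ast}_{m})=\frac{2^{-p}C\,2^{(d+p)m}}{2^{d+p}-1}=c_{\ast}\,(N^{\ast}_{m})^{1+p/d}$ with $c_{\ast}:=\frac{2^{-p}(2^{d}-1)^{1+p/d}}{(2^{d+p}-1)\,C^{p/d}}$, the value of $c_{\ast}$ being independent of $m$. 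Since $\mathcal{S}$ is affine on each window and agrees at both endpoints with the convex function $N\mapsto c_{\ast}N^{1+p/d}$, a convex function lies below its chords, so $\mathcal{S}(N)\ge c_{\ast}N^{1+p/d}$ for every $N>0$; thus $\sum_{n=1}^{N}\rho_{p}^{p}(W^{\alpha}_{g}(\varphi_{n}))\ge c_{\ast}N^{1+p/d}$.

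It then remains to check that $c_{\ast}$ dominates the stated constant. Using $\Gamma(2\alpha+3)=2(\alpha+1)\Gamma(2\alpha+2)$ (which also puts $\nu_{\alpha}(B_{r})$ and $C$ in their closed forms) one rewrites the target constant as $\bigl(\frac{3}{2^{2d+1}C}\bigr)^{p/d}=\bigl(\frac{3\Gamma(2\alpha+3)}{2^{\frac{8}{p}(\alpha+1)+6\alpha+8}}\bigr)^{p/d}$, so after cancelling $C^{-p/d}$ the claim $c_{\ast}\ge\bigl(\frac{3\Gamma(2\alpha+3)}{2^{\frac{8}{p}(\alpha+1)+6\alpha+8}}\bigr)^{p/d}$ becomes $\frac{2^{-p}(2^{d}-1)^{1+p/d}}{2^{d+p}-1}\ge\frac{3^{p/d}}{2^{(2d+1)p/d}}$, and writing $q=2^{d}=2^{4(\alpha+1)}\ge 4$ and $u=p/d>0$ this simplifies to the elementary inequality $(q-1)\bigl(\frac{2q(q-1)}{3}\bigr)^{u}\ge q^{1+u}-1$. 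I expect this last step to be \textbf{the only genuine obstacle}: I would prove it by noting equality at $u=0$ and comparing derivatives in $u$, which reduces to the single-variable estimate $(q-1)\ln\frac{2q(q-1)}{3}\ge q\ln q$ for $q\ge 4$ (proved by checking it at $q=4$ and differentiating in $q$, and $q\ge4$ is exactly where the hypothesis $\alpha\ge-\tfrac12$ is used); since moreover $\frac{2q(q-1)}{3q}=\frac{2(q-1)}{3}>1$ for $q\ge4$, the derivative of the difference of the two sides can be shown to have no zero on $(0,\infty)$, hence stays positive, and the difference stays positive for all $u>0$. Everything else is routine manipulation of geometric series.
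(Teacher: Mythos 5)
Your argument is correct, but its second half takes a genuinely different route from the paper's. Both proofs begin identically: the dyadic classes $P_i$, the observation that $n\in P_i$ makes $W^{\alpha}_{g}(\varphi_n)$ $\tfrac12$-time-frequency concentrated on $B_{2^{i+2/p}}$, and the counting bound $N_i\le C\,2^{4(\alpha+1)i}$ from Proposition 2. From there the paper runs the standard Shapiro pigeonhole: it sums the geometric series to get $\#\bigcup_{i=i_0}^{m}P_i\le Q2^{4(\alpha+1)m}$, chooses $m$ with $2Q2^{4(m-1)(\alpha+1)}<N\le 2Q2^{4m(\alpha+1)}$, concludes that at least half of the indices $1,\dots,N$ satisfy $\rho_p\ge 2^{m-1}$, and obtains $\sum\rho_p^p\ge\tfrac{N}{2}2^{(m-1)p}$ (plus a separate, and in fact garbled, case $N\le 2Q2^{4i_0(\alpha+1)}$ that reuses an undefined $m$). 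You instead solve the relaxed counting problem exactly: minimize $\sum_i a_i2^{p(i-1)}$ subject to $a_i\le C2^{di}$ and $\sum_i a_i=N$ by the greedy/exchange argument, evaluate the piecewise-affine value function at its breakpoints, and interpolate by convexity. This buys a cleaner and in fact sharper constant $c_{\ast}$, eliminates the case split and any reliance on Lemma 1 or the existence of $i_0$, and makes the exponent $1+\frac{p}{4(\alpha+1)}$ transparent; the price is the extra verification $c_{\ast}\ge(\text{stated constant})$, which you correctly reduce to $(q-1)\bigl(\tfrac{2q(q-1)}{3}\bigr)^{u}\ge q^{1+u}-1$ for $q=2^{4(\alpha+1)}\ge4$ and $u=p/(4(\alpha+1))>0$. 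That inequality does hold by exactly the route you sketch: with $A=\tfrac{2q(q-1)}{3}\ge q$ and $F(u)=(q-1)A^{u}-q^{1+u}+1$ one has $F(0)=0$ and $F'(u)\ge q^{u}\bigl[(q-1)\ln A-q\ln q\bigr]\ge q^{u}\ln 2>0$, since $G(q)=(q-1)\ln A-q\ln q$ satisfies $G(4)=\ln 2$ and $G'(q)=\ln\tfrac{2(q-1)}{3}+1-\tfrac1q>0$ for $q\ge4$. One shared caveat: like the paper's own statement and proof, your argument needs the implicit normalization $\|g\|_{L^{2}_{\alpha}(\mathbb{R_{+}})}=1$ (otherwise $\rho_p^p$ scales by $\|g\|^{2}$ while the right-hand side of the theorem does not); you at least make this assumption explicit.
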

\begin{proof}
Let $m\in\mathbb{Z}$, then according to (28), for every $m>i_{0}$, the number of element in $\bigcup_{i=i_{0}}^{m}P_{i}$ is less than
$Q2^{4(\alpha+1)m}$, where
\begin{equation*}
		\begin{split}
Q=\frac{2^{\frac{8}{p}(\alpha+1)+2\alpha+3}}{3\Gamma(2\alpha+3)}
		\end{split}
	\end{equation*}
is a constant that does not depend on $m$.

Now if $N>2Q2^{4i_{0}(\alpha+1)}$, then we can choose an integer $m>i_{0}$ such that
\begin{equation*}
		\begin{split}
2Q2^{4(m-1)(\alpha+1)}<N\leq2Q2^{4m(\alpha+1)}
		\end{split}
	\end{equation*}
Therefore, at least half of $1,\cdots,N$ do not belong to $\bigcup_{i=i_{0}}^{m-1}P_{i}$ and
we have
\begin{equation*}
		\begin{split}
\sum^{N}_{n=1}\rho^{p}_{p}(W^{\alpha}_{g}(\varphi_{n}))\geq\frac{N}{2}2^{(m-1)p}\geq
N^{1+\frac{p}{4(\alpha+1)}}\left( \frac{3\Gamma(2\alpha+3)}{2^{\frac{12}{p}(\alpha+1)+6\alpha+5}}\right)^{\frac{p}{4(\alpha+1)}}
		\end{split}
	\end{equation*}

Finally, if $N\leq2Q2^{4i_{0}(\alpha+1)}$, then we have
\begin{equation*}
		\begin{split}
\sum^{N}_{n=1}\rho^{p}_{p}(W^{\alpha}_{g}(\varphi_{n}))\geq N2^{(m-1)p}\geq
N^{1+\frac{p}{4(\alpha+1)}}\left( \frac{3\Gamma(2\alpha+3)}{2^{\frac{8}{p}(\alpha+1)+6\alpha+8}}\right)^{\frac{p}{4(\alpha+1)}}
		\end{split}
	\end{equation*}
This completes the proof.
\end{proof}

\begin{thm}
Let $g\in L^{2}_{\alpha}(\mathbb{R_{+}})$ be a window function such that $\|g\|^{2}_{L^{2}_{\alpha}(\mathbb{R_{+}}\times \mathbb{R_{+}^{*}})}=1$. Suppose that
 $\|f\|^{2}_{L^{2}_{\alpha}(\mathbb{R_{+}}\times \mathbb{R_{+}^{*}})}=1$, then for $E\subset \mathbb{R_{+}}\times \mathbb{R_{+}^{*}}$ and $\eta\geq0$ such that
\begin{equation*}
		\begin{split}
 \int_{E}\int_{E}\mid W^{\alpha}_{g}(f)(k,s)\mid^{2}\emph{d}\nu_{\alpha}(k,s)\geq1-\eta
		\end{split}
	\end{equation*}
we have
\begin{equation*}
		\begin{split}
 \nu_{\alpha}(E)\geq1-\eta
		\end{split}
	\end{equation*}
\end{thm}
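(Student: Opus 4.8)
The plan is to combine the pointwise bound on the windowed Hankel transform (the Cauchy--Schwarz inequality, item (1) of Proposition~1) with a crude estimate of the mass of $W^{\alpha}_{g}(f)$ over the set $E$. First I would observe that, since $\|f\|^{2}_{L^{2}_{\alpha}(\mathbb{R_{+}})}=\|g\|^{2}_{L^{2}_{\alpha}(\mathbb{R_{+}})}=1$, inequality (19) gives
\[
\|W^{\alpha}_{g}(f)\|_{L^{\infty}_{\alpha}(\mathbb{R_{+}}\times \mathbb{R_{+}^{*}})}\leq \|f\|_{L^{2}_{\alpha}(\mathbb{R_{+}})}\|g\|_{L^{2}_{\alpha}(\mathbb{R_{+}})}=1 ,
\]
so $|W^{\alpha}_{g}(f)(k,s)|\leq 1$ for $\nu_{\alpha}$-almost every $(k,s)$.

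Next I would estimate the hypothesis integral from above using this pointwise bound:
\[
1-\eta\leq\int_{E}\int_{E}|W^{\alpha}_{g}(f)(k,s)|^{2}\,\emph{d}\nu_{\alpha}(k,s)
\leq \|W^{\alpha}_{g}(f)\|^{2}_{L^{\infty}_{\alpha}(\mathbb{R_{+}}\times \mathbb{R_{+}^{*}})}\,\nu_{\alpha}(E)\leq\nu_{\alpha}(E) .
\]
This chain of inequalities is exactly the desired conclusion $\nu_{\alpha}(E)\geq 1-\eta$, so the argument terminates here. (Plancherel's formula (20) is implicit in the background, ensuring that $\|W^{\alpha}_{g}(f)\|_{L^{2}_{\alpha}(\mathbb{R_{+}}\times \mathbb{R_{+}^{*}})}=1$, which makes the normalization $1-\eta$ on the left-hand side of the hypothesis meaningful, but it is not strictly needed for the final estimate.)

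There is essentially no hard step here: the only thing to get right is recognizing that the Cauchy--Schwarz bound of Proposition~1 is precisely a uniform pointwise bound of $1$ under the stated normalization, after which the measure estimate is immediate by monotonicity of the integral. If one wanted a sharper or a weighted version, the obstacle would become controlling $\|W^{\alpha}_{g}(f)\|_{L^{\infty}}$ more delicately, but for the stated Donoho--Stark-type inequality this trivial bound suffices.
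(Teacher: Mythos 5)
Your proof is correct and follows essentially the same route as the paper: both bound the integral over $E$ by $\|W^{\alpha}_{g}(f)\|^{2}_{L^{\infty}_{\alpha}(\mathbb{R_{+}}\times \mathbb{R_{+}^{*}})}\,\nu_{\alpha}(E)$ and then invoke the Cauchy--Schwarz bound of Proposition~1 with the unit normalizations of $f$ and $g$. (If anything, your version is slightly more careful than the paper's, which writes the $L^{\infty}$ norm unsquared in the intermediate step.)
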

\begin{proof}
According to (18), we have
\begin{equation*}
		\begin{split}
 1-\eta\leq\int_{E}\int_{E}\mid W^{\alpha}_{g}(f)(k,s)\mid^{2}\emph{d}\nu_{\alpha}(k,s)\leq
 \|W^{\alpha}_{g}(f)\|_{L^{\infty}_{\alpha}(\mathbb{R_{+}}\times \mathbb{R_{+}^{*}})}\nu_{\alpha}(E)\leq\nu_{\alpha}(E)
		\end{split}
	\end{equation*}
\end{proof}
\begin{pro}
Let $g\in L^{2}_{\alpha}(\mathbb{R_{+}})$ be a window function such that $\|g\|^{2}_{L^{2}_{\alpha}(\mathbb{R_{+}}\times \mathbb{R_{+}^{*}})}=1$. Then for every function
$f\in L^{2}_{\alpha}(\mathbb{R_{+}})$ and $E\subset\mathbb{R_{+}}\times \mathbb{R_{+}^{*}}$ such that $\nu_{\alpha}(E)<1$,
then
\begin{equation*}
		\begin{split}
 \|\chi_{E^{c}}W^{\alpha}_{g}(f)\|_{L^{2}_{\alpha}(\mathbb{R_{+}}\times \mathbb{R_{+}^{*}})}\geq\sqrt{1-\nu_{\alpha}(E)}\|f\|_{L^{2}_{\alpha}(\mathbb{R_{+}})}
		\end{split}
	\end{equation*}
\end{pro}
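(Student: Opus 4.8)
The plan is to read the estimate off directly from the Plancherel formula (20) and the pointwise Cauchy--Schwarz bound (19); no additional machinery is required, and indeed the inequality can alternatively be deduced from Theorem 3 by homogeneity. First, since $g$ is normalized so that $\|g\|_{L^{2}_{\alpha}(\mathbb{R_{+}})}=1$, the Plancherel identity (20) gives
\[
\|W^{\alpha}_{g}(f)\|^{2}_{L^{2}_{\alpha}(\mathbb{R_{+}}\times \mathbb{R_{+}^{*}})}=\|f\|^{2}_{L^{2}_{\alpha}(\mathbb{R_{+}})}\,\|g\|^{2}_{L^{2}_{\alpha}(\mathbb{R_{+}})}=\|f\|^{2}_{L^{2}_{\alpha}(\mathbb{R_{+}})}.
\]
Since $E$ and $E^{c}$ partition $\mathbb{R_{+}}\times \mathbb{R_{+}^{*}}$, I would split this norm as
\[
\|f\|^{2}_{L^{2}_{\alpha}(\mathbb{R_{+}})}=\|\chi_{E}W^{\alpha}_{g}(f)\|^{2}_{L^{2}_{\alpha}(\mathbb{R_{+}}\times \mathbb{R_{+}^{*}})}+\|\chi_{E^{c}}W^{\alpha}_{g}(f)\|^{2}_{L^{2}_{\alpha}(\mathbb{R_{+}}\times \mathbb{R_{+}^{*}})}.
\]

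The next step is to control the $L^{2}$-mass carried by $E$ from above. By the Cauchy--Schwarz inequality (19) one has $|W^{\alpha}_{g}(f)(k,s)|\le\|f\|_{L^{2}_{\alpha}(\mathbb{R_{+}})}\|g\|_{L^{2}_{\alpha}(\mathbb{R_{+}})}=\|f\|_{L^{2}_{\alpha}(\mathbb{R_{+}})}$ for every $(k,s)\in\mathbb{R_{+}}\times \mathbb{R_{+}^{*}}$. Since the hypothesis $\nu_{\alpha}(E)<1$ guarantees $\nu_{\alpha}(E)<+\infty$, integrating $|W^{\alpha}_{g}(f)|^{2}$ over $E$ against $\nu_{\alpha}$ and using monotonicity of the integral yields
\[
\|\chi_{E}W^{\alpha}_{g}(f)\|^{2}_{L^{2}_{\alpha}(\mathbb{R_{+}}\times \mathbb{R_{+}^{*}})}=\int_{E}|W^{\alpha}_{g}(f)(k,s)|^{2}\,\emph{d}\nu_{\alpha}(k,s)\le\nu_{\alpha}(E)\,\|f\|^{2}_{L^{2}_{\alpha}(\mathbb{R_{+}})}.
\]

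Finally, substituting this bound into the splitting identity gives
\[
\|\chi_{E^{c}}W^{\alpha}_{g}(f)\|^{2}_{L^{2}_{\alpha}(\mathbb{R_{+}}\times \mathbb{R_{+}^{*}})}\ge\bigl(1-\nu_{\alpha}(E)\bigr)\,\|f\|^{2}_{L^{2}_{\alpha}(\mathbb{R_{+}})},
\]
and taking square roots --- legitimate precisely because $\nu_{\alpha}(E)<1$ keeps the right-hand side nonnegative --- produces the assertion. I do not expect any genuine obstacle: the argument is essentially a two-line consequence of (19) and (20). The only point that deserves a moment's care is the passage from the uniform bound on $W^{\alpha}_{g}(f)$ to the bound on its $L^{2}$-mass over $E$, which relies on the finiteness of $\nu_{\alpha}(E)$. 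As an alternative one may invoke Theorem 3 with $\eta=1-\|\chi_{E}W^{\alpha}_{g}(f)\|^{2}_{L^{2}_{\alpha}(\mathbb{R_{+}}\times \mathbb{R_{+}^{*}})}$ after normalizing $f$, which returns $\nu_{\alpha}(E)\ge 1-\|\chi_{E^{c}}W^{\alpha}_{g}(f)\|^{2}_{L^{2}_{\alpha}(\mathbb{R_{+}}\times \mathbb{R_{+}^{*}})}$ and rearranges to the same inequality; and for a non-normalized window the identical computation gives $\|\chi_{E^{c}}W^{\alpha}_{g}(f)\|_{L^{2}_{\alpha}(\mathbb{R_{+}}\times \mathbb{R_{+}^{*}})}\ge\|g\|_{L^{2}_{\alpha}(\mathbb{R_{+}})}\sqrt{1-\nu_{\alpha}(E)}\,\|f\|_{L^{2}_{\alpha}(\mathbb{R_{+}})}$.
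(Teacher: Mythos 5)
Your argument is correct and is essentially the paper's own proof: both combine the Plancherel identity with the pointwise Cauchy--Schwarz bound $|W^{\alpha}_{g}(f)(k,s)|\le\|f\|_{L^{2}_{\alpha}(\mathbb{R_{+}})}\|g\|_{L^{2}_{\alpha}(\mathbb{R_{+}})}$ to control the mass on $E$, then rearrange and take square roots (the paper writes the splitting of the norm as an inequality where you correctly note it is an exact Pythagorean identity, which changes nothing). The only slip is bibliographic: in the paper the Cauchy--Schwarz bound is equation (18) and Plancherel is (19), so your citations to (19) and (20) are each off by one.
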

\begin{proof}
By (18), we have
\begin{equation*}
		\begin{split}
 \|W^{\alpha}_{g}(f)\|^{2}_{L^{2}_{\alpha}(\mathbb{R_{+}}\times \mathbb{R_{+}^{*}})}&=\|\chi_{E}W^{\alpha}_{g}(f)+\chi_{E^{c}}W^{\alpha}_{g}(f)
 \|^{2}_{L^{2}_{\alpha}(\mathbb{R_{+}}\times \mathbb{R_{+}^{*}})}\\
 &\leq \|\chi_{E}W^{\alpha}_{g}(f)\|^{2}_{L^{2}_{\alpha}(\mathbb{R_{+}}\times \mathbb{R_{+}^{*}})}+ \|\chi_{E^{c}}W^{\alpha}_{g}(f)\|^{2}_{L^{2}_{\alpha}(\mathbb{R_{+}}\times \mathbb{R_{+}^{*}})}\\
&\leq\nu_{\alpha}(E)\|W^{\alpha}_{g}(f)\|^{2}_{L^{\infty}_{\alpha}(\mathbb{R_{+}}\times \mathbb{R_{+}^{*}})}+ \|\chi_{E^{c}}W^{\alpha}_{g}(f)\|^{2}_{L^{2}_{\alpha}(\mathbb{R_{+}}\times \mathbb{R_{+}^{*}})}\\
&\leq\nu_{\alpha}(E)\|g\|^{2}_{L^{2}_{\alpha}(\mathbb{R_{+}})}\|f\|^{2}_{L^{2}_{\alpha}(\mathbb{R_{+}})}+ \|\chi_{E^{c}}W^{\alpha}_{g}(f)\|^{2}_{L^{2}_{\alpha}(\mathbb{R_{+}}\times \mathbb{R_{+}^{*}})}\\
		\end{split}
	\end{equation*}
Hence
\begin{equation*}
		\begin{split}
 \|\chi_{E^{c}}W^{\alpha}_{g}(f)\|_{L^{2}_{\alpha}(\mathbb{R_{+}}\times \mathbb{R_{+}^{*}})}\geq\sqrt{1-\nu_{\alpha}(E)}\|f\|_{L^{2}_{\alpha}(\mathbb{R_{+}})}
		\end{split}
	\end{equation*}
\end{proof}
\begin{thm} (Local uncertainty inequality for the windowed Hankel
transform)
Let $x$ be a real number such that $0<x<\alpha+1$, $t\geq0$, then for every non-zero function
$f\in L^{2}_{\alpha}(\mathbb{R_{+}})$ and for every measurable subset $E\subset\mathbb{R_{+}}\times \mathbb{R_{+}^{*}}$ such that
$0<\nu_{\alpha}(E)<+\infty$, we have
\begin{equation}
		\begin{split}
		\|\chi_{E}W^{\alpha}_{g}(f)\|_{L^{2}_{\alpha}(\mathbb{R_{+}}\times \mathbb{R_{+}^{*}})}&\leq
\left(1+\frac{x^{2}}{(\nu_{\alpha}(E))^{1/2}(\alpha+1-x)^{2}}\right)
\left(\frac{x^{2}2^{\alpha+1}\Gamma(\alpha+1)}{\nu_{\alpha}(E)(\alpha+1-x)} \right)^{-x/(\alpha+1)}
\|t^{x}f\|_{L^{2}_{\alpha}(\mathbb{R_{+}})}\|t^{x}g\|_{L^{2}_{\alpha}(\mathbb{R_{+}})}
		\end{split}
	\end{equation}
\end{thm}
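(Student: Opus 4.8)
The plan is to run the classical Faris–Price truncation scheme in the physical variable. Fix a cutoff $a>0$ and decompose $f=f\chi_{[0,a]}+f\chi_{(a,+\infty)}=:f_{1}+f_{2}$. Since $W^{\alpha}_{g}$ is linear in its first argument (by the integral formula (16)), the triangle inequality gives
$\|\chi_{E}W^{\alpha}_{g}(f)\|_{L^{2}_{\alpha}(\mathbb{R_{+}}\times\mathbb{R_{+}^{*}})}\le\|\chi_{E}W^{\alpha}_{g}(f_{1})\|_{L^{2}_{\alpha}(\mathbb{R_{+}}\times\mathbb{R_{+}^{*}})}+\|\chi_{E}W^{\alpha}_{g}(f_{2})\|_{L^{2}_{\alpha}(\mathbb{R_{+}}\times\mathbb{R_{+}^{*}})}$. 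The two summands are estimated by different devices and $a$ is chosen at the very end; the balancing value is of the form $a^{\alpha+1}\sim\nu_{\alpha}(E)^{-1/2}$, which is exactly what converts the powers of $a$ into the exponent $-x/(\alpha+1)$ appearing in the statement.

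For the ``low'' part I would use Cauchy–Schwarz on the set $E$, namely $\|\chi_{E}W^{\alpha}_{g}(f_{1})\|_{L^{2}_{\alpha}(\mathbb{R_{+}}\times\mathbb{R_{+}^{*}})}\le\nu_{\alpha}(E)^{1/2}\|W^{\alpha}_{g}(f_{1})\|_{L^{\infty}_{\alpha}(\mathbb{R_{+}}\times\mathbb{R_{+}^{*}})}$, and then estimate the $L^{\infty}$ norm by a \emph{weighted} Hölder inequality inside $W^{\alpha}_{g}(f_{1})(k,s)=\int_{0}^{a}f(t)\overline{g^{\alpha}_{k,s}(t)}\,d\gamma_{\alpha}(t)$: splitting $1=t^{x}\cdot t^{-x}$ and using the norm controls $\|g^{\alpha}_{k,s}\|\le\|g\|$ that come from (9), (14), (15) one is led to a bound of the shape $|W^{\alpha}_{g}(f_{1})(k,s)|\le\big(\int_{0}^{a}t^{-2x}\,d\gamma_{\alpha}(t)\big)^{1/2}\|t^{x}f\|_{L^{2}_{\alpha}(\mathbb{R_{+}})}\times(\text{window factor})$, where $\int_{0}^{a}t^{-2x}\,d\gamma_{\alpha}(t)=\frac{a^{2(\alpha+1-x)}}{2^{\alpha+1}\Gamma(\alpha+1)(\alpha+1-x)}$ — finite precisely because $x<\alpha+1$, which is where that hypothesis is used — and the window factor, obtained by running the same truncation-and-weight argument on the modulated window (passing to the Hankel side through $\mathcal{H}_{\alpha}(\mathcal{M}^{\alpha}_{s}g)=\sqrt{\tau^{\alpha}_{s}|\mathcal{H}_{\alpha}g|^{2}}$), yields the factor $\|t^{x}g\|_{L^{2}_{\alpha}(\mathbb{R_{+}})}$. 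Thus $\|\chi_{E}W^{\alpha}_{g}(f_{1})\|_{L^{2}_{\alpha}(\mathbb{R_{+}}\times\mathbb{R_{+}^{*}})}\le c\,\nu_{\alpha}(E)^{1/2}a^{\alpha+1-x}\|t^{x}f\|_{L^{2}_{\alpha}(\mathbb{R_{+}})}\|t^{x}g\|_{L^{2}_{\alpha}(\mathbb{R_{+}})}$ with the explicit constant $c=(2^{\alpha+1}\Gamma(\alpha+1)(\alpha+1-x))^{-1/2}$.

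For the ``high'' part I would not touch $E$ at all but only invoke Plancherel's formula (19): $\|\chi_{E}W^{\alpha}_{g}(f_{2})\|_{L^{2}_{\alpha}(\mathbb{R_{+}}\times\mathbb{R_{+}^{*}})}\le\|W^{\alpha}_{g}(f_{2})\|_{L^{2}_{\alpha}(\mathbb{R_{+}}\times\mathbb{R_{+}^{*}})}=\|f_{2}\|_{L^{2}_{\alpha}(\mathbb{R_{+}})}\|g\|_{L^{2}_{\alpha}(\mathbb{R_{+}})}$, and then $\|f_{2}\|_{L^{2}_{\alpha}(\mathbb{R_{+}})}^{2}=\int_{a}^{\infty}|f|^{2}\,d\gamma_{\alpha}\le a^{-2x}\int_{a}^{\infty}t^{2x}|f|^{2}\,d\gamma_{\alpha}\le a^{-2x}\|t^{x}f\|_{L^{2}_{\alpha}(\mathbb{R_{+}})}^{2}$, so that this term is $\le a^{-x}\|t^{x}f\|_{L^{2}_{\alpha}(\mathbb{R_{+}})}\|t^{x}g\|_{L^{2}_{\alpha}(\mathbb{R_{+}})}$ after the same upgrade of $\|g\|$ to $\|t^{x}g\|$. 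Adding the two pieces gives a bound $\big(c\,\nu_{\alpha}(E)^{1/2}a^{\alpha+1-x}+a^{-x}\big)\|t^{x}f\|_{L^{2}_{\alpha}(\mathbb{R_{+}})}\|t^{x}g\|_{L^{2}_{\alpha}(\mathbb{R_{+}})}$; plugging in the value of $a$ determined by $\nu_{\alpha}(E),x,\alpha$ and simplifying the Gamma and power-of-$2$ factors produces the stated inequality, the prefactor $1+\frac{x^{2}}{\nu_{\alpha}(E)^{1/2}(\alpha+1-x)^{2}}$ arising as the ratio of the two contributions at that $a$. The real obstacle is the ``window factor'': because $\mathcal{M}^{\alpha}_{s}$ is nonlinear in $g$ (it involves $\sqrt{\tau^{\alpha}_{s}|\mathcal{H}_{\alpha}g|^{2}}$), one cannot simply split $g$, so obtaining a clean $\|t^{x}g\|_{L^{2}_{\alpha}(\mathbb{R_{+}})}$ — rather than an uncontrolled $\|g\|$, $\|\mathcal{H}_{\alpha}g\|_{\infty}$ or $\sup_{s}\|\mathcal{M}^{\alpha}_{s}g\|_{\infty}$ — together with exact bookkeeping of the constants through the optimisation in $a$ is the delicate step; the route I would take is to carry the truncation through on the Hankel side, exploiting that $\tau^{\alpha}_{s}$ commutes with the Bessel operator and that $u\mapsto|u|$ and $u\mapsto\sqrt{u}$ behave well for the weights involved.
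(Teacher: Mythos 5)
Your proposal follows essentially the same route as the paper's own proof: the same truncation $f=\chi_{[0,a)}f+\chi_{[a,+\infty)}f$, the same estimate of the low part by $\|\chi_{E}W^{\alpha}_{g}(f_{1})\|_{L^{2}_{\alpha}}\leq(\nu_{\alpha}(E))^{1/2}\|W^{\alpha}_{g}(f_{1})\|_{L^{\infty}_{\alpha}}$ followed by the weight split $1=t^{x}\cdot t^{-x}$ and the computation $\|\chi_{[0,a)}t^{-x}\|_{L^{2}_{\alpha}}=a^{\alpha+1-x}\left(2^{\alpha+1}\Gamma(\alpha+1)(\alpha+1-x)\right)^{-1/2}$, the same estimate of the high part by Plancherel together with $\sup_{t\geq a}t^{-x}=a^{-x}$, and the same choice $a_{0}=\left(x^{2}2^{\alpha+1}\Gamma(\alpha+1)/(\nu_{\alpha}(E)(\alpha+1-x))\right)^{1/2(\alpha+1)}$ at the end. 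The one step you flag as the real obstacle --- producing $\|t^{x}g\|_{L^{2}_{\alpha}}$ on the right instead of $\|g\|_{L^{2}_{\alpha}}$ --- is exactly the step the paper does not actually carry out: it simply writes $\|\chi_{[0,a)}f\|_{L^{2}_{\alpha}}\|\chi_{[0,a)}g\|_{L^{2}_{\alpha}}$ after invoking the Cauchy--Schwarz bound (18) and $\|\chi_{[a,+\infty)}f\|_{L^{2}_{\alpha}}\|\chi_{[a,+\infty)}g\|_{L^{2}_{\alpha}}$ after invoking Plancherel (19), although both of those identities only deliver the full norm $\|g\|_{L^{2}_{\alpha}}$, which is not dominated by $\|t^{x}g\|_{L^{2}_{\alpha}}$ in general. (A related soft spot, which your instinct to route the low part through an $L^{1}$--$L^{\infty}$ pairing correctly detects, is the paper's ``H\"older'' step $\|\chi_{[0,a)}f\|_{L^{2}_{\alpha}}\leq\|t^{x}f\|_{L^{2}_{\alpha}}\|\chi_{[0,a)}t^{-x}\|_{L^{2}_{\alpha}}$, which as written bounds an $L^{2}$ norm by a product of two $L^{2}$ norms.) So your reservation is not a gap of your write-up relative to the paper; it is an unjustified step in the paper itself, and neither your sketch nor the published proof closes it.
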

\begin{proof}
For every $a>0$, according to Proposition 3, we have
\begin{equation*}
		\begin{split}
\|\chi_{E}W^{\alpha}_{g}(f)\|_{L^{2}_{\alpha}(\mathbb{R_{+}}\times \mathbb{R_{+}^{*}})}
\leq (\nu_{\alpha}(E))^{1/2}\|\chi_{[0,a)}f\|_{L^{2}_{\alpha}(\mathbb{R_{+}})}\|\chi_{[0,a)}g\|_{L^{2}_{\alpha}
(\mathbb{R_{+}})}+
\|W^{\alpha}_{g}(\chi_{[a,+\infty)}f)\|_{L^{2}_{\alpha}(\mathbb{R_{+}}\times \mathbb{R_{+}^{*}})}
		\end{split}
	\end{equation*}
However, by H$\ddot{o}$lder's inequality, (3) and (6), we have
 \begin{equation*}
		\begin{split}
 \|\chi_{[0,a)}f\|_{L^{2}_{\alpha}(\mathbb{R_{+}})}
 &\leq\|t^{x}f\|_{L^{2}_{\alpha}}\|\chi_{[0,a)}t^{-x}\|_{L^{2}_{\alpha}(\mathbb{R_{+}})}\\
 &=\|t^{x}f\|_{L^{2}_{\alpha}(\mathbb{R_{+}})}\frac{a^{\alpha+1-x}}{(2^{\alpha+1}\Gamma(\alpha+1)(\alpha+1-x))^{1/2}}
		\end{split}
	\end{equation*}
By (19), we have
\begin{equation*}
		\begin{split}
 \|W^{\alpha}_{g}(\chi_{[a,+\infty)}f)\|_{L^{2}_{\alpha}(\mathbb{R_{+}}\times \mathbb{R_{+}^{*}})}&=\|\chi_{[a,+\infty)}f\|_{L^{2}_{\alpha}(\mathbb{R_{+}})}
 \|\chi_{[a,+\infty)}g\|_{L^{2}_{\alpha}(\mathbb{R_{+}})}\\
 &\leq\|t^{x}f\|_{L^{2}_{\alpha}(\mathbb{R_{+}})}\|\chi_{[a,+\infty)}t^{-x}\|_{L^{\infty}_{\alpha}(\mathbb{R_{+}})}
 \|t^{x}g\|_{L^{2}_{\alpha}(\mathbb{R_{+}})}\|\chi_{[a,+\infty)}t^{-x}\|_{L^{\infty}_{\alpha}(\mathbb{R_{+}})}\\
 &=a^{-2x}\|t^{x}f\|_{L^{2}_{\alpha}(\mathbb{R_{+}})}\|t^{x}g\|_{L^{2}_{\alpha}(\mathbb{R_{+}})}
		\end{split}
	\end{equation*}
Hence
\begin{equation}
		\begin{split}
		\|\chi_{E}W^{\alpha}_{g}(f)\|_{L^{2}_{\alpha}(\mathbb{R_{+}}\times \mathbb{R_{+}^{*}})}
&\leq \left(a^{-2x}+\frac{(\nu_{\alpha}(E))^{1/2}}{2^{\alpha+1}\Gamma(\alpha+1)(\alpha+1-x)} a^{2\alpha+2-2x}
\right)\|t^{x}f\|_{L^{2}_{\alpha}(\mathbb{R_{+}})}\|t^{x}g\|_{L^{2}_{\alpha}(\mathbb{R_{+}})}
		\end{split}
	\end{equation}
in particular inequality (32) holds for
\begin{equation*}
		\begin{split}
		a_{0}=\left(\frac{x^{2}2^{\alpha+1}\Gamma(\alpha+1)}{\nu_{\alpha}(E)(\alpha+1-x)} \right)^{1/2(\alpha+1)}
		\end{split}
	\end{equation*}
Hence
\begin{equation*}
		\begin{split}
		\|\chi_{E}W^{\alpha}_{g}(f)\|_{L^{2}_{\alpha}(\mathbb{R_{+}}\times \mathbb{R_{+}^{*}})}&\leq
\left(1+\frac{x^{2}}{(\nu_{\alpha}(E))^{1/2}(\alpha+1-x)^{2}}\right)
\left(\frac{x^{2}2^{\alpha+1}\Gamma(\alpha+1)}{\nu_{\alpha}(E)(\alpha+1-x)} \right)^{-x/(\alpha+1)}
\|t^{x}f\|_{L^{2}_{\alpha}(\mathbb{R_{+}})}\|t^{x}g\|_{L^{2}_{\alpha}(\mathbb{R_{+}})}
		\end{split}
	\end{equation*}
\end{proof}

According to the following logarithmic uncertainty principle for the Hankel transform \cite{[3]},
we obtain the logarithmic uncertainty principle for the windowed Hankel transform.

\begin{pro}
For every $f\in L^{2}_{\alpha}(\mathbb{R_{+}})$, the following inequality holds:
\begin{equation}
		\begin{split} \int^{+\infty}_{0}\ln(t)|f(t)|^{2}\emph{d}\gamma_{\alpha}(t)+\int^{+\infty}_{0}\ln(\lambda)|\mathcal{H}_{\alpha}(f)(\lambda)|^{2}
\emph{d}\gamma_{\alpha}(\lambda)\geq
\left( \psi\left( \frac{\alpha+1}{2}\right)+\ln(2)\right)\int^{+\infty}_{0}|f(t)|^{2}\emph{d}\gamma_{\alpha}(t)
		\end{split}
	\end{equation}
where $\psi$ denotes the logarithmic derivative of the Euler function $\Gamma$ [9,11].
\end{pro}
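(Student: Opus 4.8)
The strategy is to deduce this logarithmic estimate from a sharp Pitt-type inequality for the Hankel transform by differentiating in the exponent at the origin (this is the classical Beckner device). The only genuinely nontrivial ingredient is Pitt's inequality with its sharp constant, which I would quote from the Hankel-transform literature rather than reprove: for every $0\le\beta<\alpha+1$ and every $f\in L^{2}_{\alpha}(\mathbb{R_{+}})$ for which the right-hand side below is finite,
\[
\int_{0}^{+\infty}\lambda^{-2\beta}\,|\mathcal{H}_{\alpha}(f)(\lambda)|^{2}\,d\gamma_{\alpha}(\lambda)\;\le\;C_{\alpha}(\beta)^{2}\int_{0}^{+\infty}t^{2\beta}\,|f(t)|^{2}\,d\gamma_{\alpha}(t),\qquad C_{\alpha}(\beta)=2^{-\beta}\,\frac{\Gamma\!\big(\tfrac{\alpha+1-\beta}{2}\big)}{\Gamma\!\big(\tfrac{\alpha+1+\beta}{2}\big)} .
\]
(This is obtained by a Schur-test / analytic-interpolation argument applied to the Hankel kernel $j_{\alpha}$.) The crucial structural fact is that $C_{\alpha}(0)=1$, so at $\beta=0$ Pitt's inequality degenerates into the Parseval equality (5), i.e. it is an \emph{equality} there.

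Next I would fix a dense class of ``nice'' functions — say $f\in L^{2}_{\alpha}(\mathbb{R_{+}})$ compactly supported away from $0$ and $+\infty$, or more generally with $\ln(2+t)(1+t^{\varepsilon})|f|\in L^{2}_{\alpha}$ and the analogous condition on $\mathcal{H}_{\alpha}(f)$ — so that differentiation under the integral sign is legitimate. For such an $f$ set
\[
\Phi(\beta)=C_{\alpha}(\beta)^{2}\int_{0}^{+\infty}t^{2\beta}|f(t)|^{2}\,d\gamma_{\alpha}(t)-\int_{0}^{+\infty}\lambda^{-2\beta}|\mathcal{H}_{\alpha}(f)(\lambda)|^{2}\,d\gamma_{\alpha}(\lambda),\qquad\beta\in[0,\alpha+1).
\]
By Pitt's inequality $\Phi\ge0$ on $[0,\alpha+1)$, and by the Parseval equality (5), $\Phi(0)=\|f\|_{L^{2}_{\alpha}(\mathbb{R_{+}})}^{2}-\|\mathcal{H}_{\alpha}(f)\|_{L^{2}_{\alpha}(\mathbb{R_{+}})}^{2}=0$. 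Since $\Phi$ attains its minimum over $[0,\alpha+1)$ at the left endpoint $\beta=0$, its right derivative there satisfies $\Phi'(0^{+})\ge0$.

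Then I would compute $\Phi'(0)$ term by term. Differentiating $\int t^{2\beta}|f|^{2}\,d\gamma_{\alpha}$ and $\int\lambda^{-2\beta}|\mathcal{H}_{\alpha}(f)|^{2}\,d\gamma_{\alpha}$ and evaluating at $\beta=0$ produces $2\int_{0}^{+\infty}\ln(t)|f(t)|^{2}\,d\gamma_{\alpha}(t)$ and $-2\int_{0}^{+\infty}\ln(\lambda)|\mathcal{H}_{\alpha}(f)(\lambda)|^{2}\,d\gamma_{\alpha}(\lambda)$ respectively, while from the explicit formula for $C_{\alpha}(\beta)$ one gets $\tfrac{d}{d\beta}\ln C_{\alpha}(\beta)\big|_{\beta=0}=-\ln2-\psi\!\big(\tfrac{\alpha+1}{2}\big)$, hence $\tfrac{d}{d\beta}C_{\alpha}(\beta)^{2}\big|_{\beta=0}=2C_{\alpha}(0)^{2}\big(-\ln2-\psi(\tfrac{\alpha+1}{2})\big)=2\big(-\ln2-\psi(\tfrac{\alpha+1}{2})\big)$. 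Dividing $\Phi'(0)\ge0$ by $2$ therefore yields
\[
\Big(-\ln2-\psi\!\big(\tfrac{\alpha+1}{2}\big)\Big)\|f\|_{L^{2}_{\alpha}(\mathbb{R_{+}})}^{2}+\int_{0}^{+\infty}\ln(t)|f(t)|^{2}\,d\gamma_{\alpha}(t)+\int_{0}^{+\infty}\ln(\lambda)|\mathcal{H}_{\alpha}(f)(\lambda)|^{2}\,d\gamma_{\alpha}(\lambda)\ge0,
\]
which is exactly the asserted inequality after moving the first term to the other side. Finally one removes the extra smoothness/decay assumptions by approximation, noting that if the left-hand side of the claimed inequality equals $-\infty$ there is nothing to prove.

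\textbf{Main obstacle.} Everything hinges on (i) having Pitt's inequality available with precisely the constant $C_{\alpha}(\beta)$ above and knowing it becomes an equality at $\beta=0$, so that $\beta=0$ is a boundary minimum of $\Phi$; and (ii) justifying the interchange of $\tfrac{d}{d\beta}$ with the two integrals, which is delicate because $\ln t$ is an \emph{unbounded} weight near $0$ and $+\infty$ — this forces one to work first on a dense subclass with suitable logarithmic-integrability and only then pass to the limit. Granting these two points, the remaining computation (the bookkeeping with $\psi$ and $\ln 2$) is entirely routine. (One may cross-check the constant in the special case $\alpha=\tfrac{d-2}{2}$, $d\in\mathbb{N}$, where the Hankel transform is the Fourier transform acting on radial functions in $\mathbb{R}^{d}$ and the inequality must reduce to Beckner's logarithmic uncertainty principle.)
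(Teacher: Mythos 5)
The paper offers no proof of this proposition: it is imported wholesale from reference [3] (Omri, \emph{Logarithmic uncertainty principle for the Hankel transform}, 2011) and serves only as input to Theorem 5, so there is no in-paper argument to compare against line by line. Your proposal --- sharp Pitt's inequality for $\mathcal{H}_{\alpha}$, the observation that $C_{\alpha}(0)=1$ turns it into the Parseval equality (5) at $\beta=0$, and differentiation of the nonnegative deficit $\Phi$ at its boundary minimum --- is precisely the Beckner-type device used in that reference, and your constant bookkeeping is right: with $C_{\alpha}(\beta)=2^{-\beta}\Gamma\big(\tfrac{\alpha+1-\beta}{2}\big)/\Gamma\big(\tfrac{\alpha+1+\beta}{2}\big)$ one gets $\tfrac{d}{d\beta}\ln C_{\alpha}(\beta)\big|_{\beta=0}=-\ln 2-\psi\big(\tfrac{\alpha+1}{2}\big)$, which reproduces the stated constant for the normalization $d\gamma_{\alpha}(t)=t^{2\alpha+1}/(2^{\alpha}\Gamma(\alpha+1))\,dt$ used here. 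Two caveats. First, your closing remark that ``if the left-hand side equals $-\infty$ there is nothing to prove'' is backwards: a left-hand side equal to $-\infty$ would \emph{falsify} the inequality, not trivialize it; the statement has to be read as restricted to those $f$ for which both logarithmic integrals converge absolutely (the paper's blanket ``for every $f\in L^{2}_{\alpha}(\mathbb{R_{+}})$'' is equally loose on this point). Second, the whole argument stands or falls with the sharp Pitt inequality, which you quote rather than prove; citing it is legitimate, but it means your write-up, like the paper's, ultimately defers the substantive work to the literature.
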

Now we arrive at the following important result.

\begin{thm} (Logarithmic uncertainty principle for the windowed Hankel transform)
For every $g\in L^{2}_{\alpha}(\mathbb{R_{+}})$ and $f\in L^{1}_{\alpha}(\mathbb{R_{+}})\bigcap L^{2}_{\alpha}(\mathbb{R_{+}})$ we have the following inequality:
\begin{equation}
		\begin{split} \int^{+\infty}_{0}\int^{+\infty}_{0}\ln(k)|W^{\alpha}_{g}(f)(k,s)|^{2}\emph{d}\nu_{\alpha}(k,s)&+
\int^{+\infty}_{0}\int^{+\infty}_{0}\ln(s)|W^{\alpha}_{f}(g)(k,s)|^{2}\emph{d}\nu_{\alpha}(k,s)\\
&\geq\left( \psi\left( \frac{\alpha+1}{2}\right)+\ln(2)\right)\|f\|^{2}_{L^{2}_{\alpha}(\mathbb{R_{+}}\times \mathbb{R_{+}^{*}})}\|g\|^{2}_{L^{2}_{\alpha}(\mathbb{R_{+}}\times \mathbb{R_{+}^{*}})}
		\end{split}
	\end{equation}
where $\psi$ denotes the logarithmic derivative of the Euler function $\Gamma$.
\end{thm}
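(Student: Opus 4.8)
The plan is to freeze the second variable, invoke the one-variable logarithmic uncertainty principle for the Hankel transform (inequality (33)) in the first variable, and then integrate over the second variable.

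First I would fix $s\in\mathbb{R}_{+}$ and set $\varphi_{s}(k):=W^{\alpha}_{g}(f)(k,s)$. By the convolution representation (17) one has $\varphi_{s}=f\sharp_{\alpha}\overline{\mathcal{M}^{\alpha}_{s}g}$, so combining the exchange formula (12) with the definition (13) of the modulation and the involution property of $\mathcal{H}_{\alpha}$ (noting that $\mathcal{M}^{\alpha}_{s}g$ is real, being the Hankel transform of the nonnegative function $\sqrt{\tau^{\alpha}_{s}|\mathcal{H}_{\alpha}(g)|^{2}}$) gives
\begin{equation*}
\mathcal{H}_{\alpha}(\varphi_{s})(\lambda)=\mathcal{H}_{\alpha}(f)(\lambda)\,\sqrt{\tau^{\alpha}_{s}|\mathcal{H}_{\alpha}(g)|^{2}(\lambda)},
\qquad
|\mathcal{H}_{\alpha}(\varphi_{s})(\lambda)|^{2}=|\mathcal{H}_{\alpha}(f)(\lambda)|^{2}\,\tau^{\alpha}_{s}|\mathcal{H}_{\alpha}(g)|^{2}(\lambda).
\end{equation*}
Since $f\in L^{1}_{\alpha}(\mathbb{R_{+}})\cap L^{2}_{\alpha}(\mathbb{R_{+}})$ and $g\in L^{2}_{\alpha}(\mathbb{R_{+}})$, the function $\varphi_{s}$ lies in $L^{2}_{\alpha}(\mathbb{R_{+}})$ for almost every $s$, and applying (33) to $\varphi_{s}$ yields, for almost every $s$,
\begin{equation*}
\int_{0}^{+\infty}\ln(k)\,|W^{\alpha}_{g}(f)(k,s)|^{2}\,d\gamma_{\alpha}(k)
+\int_{0}^{+\infty}\ln(\lambda)\,|\mathcal{H}_{\alpha}(f)(\lambda)|^{2}\,\tau^{\alpha}_{s}|\mathcal{H}_{\alpha}(g)|^{2}(\lambda)\,d\gamma_{\alpha}(\lambda)
\geq C_{\alpha}\int_{0}^{+\infty}|W^{\alpha}_{g}(f)(k,s)|^{2}\,d\gamma_{\alpha}(k),
\end{equation*}
where $C_{\alpha}=\psi\left(\frac{\alpha+1}{2}\right)+\ln(2)$.

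Next I would integrate this inequality over $s\in\mathbb{R}_{+}$ against $d\gamma_{\alpha}(s)$. The first term integrates to $\int_{0}^{+\infty}\int_{0}^{+\infty}\ln(k)|W^{\alpha}_{g}(f)(k,s)|^{2}\,d\nu_{\alpha}(k,s)$, which is the first summand on the left of (34), and by the Plancherel formula (19) the right-hand side integrates to $C_{\alpha}\|f\|^{2}_{L^{2}_{\alpha}(\mathbb{R_{+}})}\|g\|^{2}_{L^{2}_{\alpha}(\mathbb{R_{+}})}$, which is the right-hand side of (34). It then remains to recast the middle term as the second summand on the left of (34), namely to prove
\begin{equation*}
\int_{0}^{+\infty}\int_{0}^{+\infty}\ln(\lambda)\,|\mathcal{H}_{\alpha}(f)(\lambda)|^{2}\,\tau^{\alpha}_{s}|\mathcal{H}_{\alpha}(g)|^{2}(\lambda)\,d\gamma_{\alpha}(\lambda)\,d\gamma_{\alpha}(s)
=\int_{0}^{+\infty}\int_{0}^{+\infty}\ln(s)\,|W^{\alpha}_{f}(g)(k,s)|^{2}\,d\nu_{\alpha}(k,s).
\end{equation*}
For this I would use Fubini's theorem together with the commutation relation (10), the symmetry of the translation kernel $K(k,t,x)$ in its three arguments, and the mass-preservation identity (11), and the fact (a consequence of (12), (13) and the self-adjointness of $\tau^{\alpha}_{s}$) that for each $s$, $\int_{0}^{+\infty}|\mathcal{H}_{\alpha}(f)(\lambda)|^{2}\tau^{\alpha}_{s}|\mathcal{H}_{\alpha}(g)|^{2}(\lambda)\,d\gamma_{\alpha}(\lambda)=\int_{0}^{+\infty}|W^{\alpha}_{f}(g)(k,s)|^{2}\,d\gamma_{\alpha}(k)$. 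Putting the three pieces together then produces (34).

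The step I expect to be the main obstacle is exactly this last identity, in which the logarithmic weight must be transported from the ``frequency'' variable $\lambda$ onto the variable $s$: unlike a power weight $t\mapsto t^{p}$, the map $t\mapsto\ln t$ does not interact with the Hankel translation $\tau^{\alpha}$ in a transparent way, so the three-fold symmetry of $K$ and the relations (10)--(11) have to be used carefully. A secondary, routine point is to justify the applications of Fubini's theorem (absolute integrability of $\ln(\lambda)|\mathcal{H}_{\alpha}(f)(\lambda)|^{2}\tau^{\alpha}_{s}|\mathcal{H}_{\alpha}(g)|^{2}(\lambda)$ on $\mathbb{R}_{+}\times\mathbb{R}_{+}$) and the measurability and square-integrability of $\varphi_{s}$, for which the hypothesis $f\in L^{1}_{\alpha}\cap L^{2}_{\alpha}$ is imposed.
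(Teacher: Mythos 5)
Your overall strategy coincides with the paper's: fix $s$, apply the one-variable logarithmic inequality (33) to $k\mapsto W^{\alpha}_{g}(f)(k,s)$ using $|\mathcal{H}_{\alpha}(W^{\alpha}_{g}(f)(\cdot,s))|^{2}=|\mathcal{H}_{\alpha}(f)|^{2}\,\tau^{\alpha}_{s}|\mathcal{H}_{\alpha}(g)|^{2}$, integrate in $s$, and identify the right-hand side via Plancherel (19)--(20). The only organizational difference is that the paper evaluates the frequency term and the $\ln(s)$-term separately, showing in (36)--(37) and (38) that each equals $\|g\|^{2}_{L^{2}_{\alpha}(\mathbb{R_{+}})}\int_{0}^{+\infty}\ln(\lambda)|\mathcal{H}_{\alpha}(f)(\lambda)|^{2}\,d\gamma_{\alpha}(\lambda)$, whereas you propose to equate the two directly. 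That is the same substance, so the issue is whether your deferred identity actually holds.

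It does not, and this is a genuine gap rather than a routine verification: the tools you cite ((10), (11), Fubini, symmetry of $K$) cannot deliver it, because (11) conserves only the \emph{unweighted} integral. Writing both sides out with the kernel, your identity amounts to
\begin{equation*}
\int_{0}^{+\infty}\int_{0}^{+\infty}\int_{0}^{+\infty}\bigl(\ln(\lambda)-\ln(s)\bigr)\,|\mathcal{H}_{\alpha}(f)(\lambda)|^{2}\,|\mathcal{H}_{\alpha}(g)(x)|^{2}\,K(s,\lambda,x)\,d\gamma_{\alpha}(x)\,d\gamma_{\alpha}(\lambda)\,d\gamma_{\alpha}(s)=0 ,
\end{equation*}
and integrating out the free variable on the $\ln(s)$ side produces $\tau^{\alpha}_{x}(\ln)(\lambda)$ where the $\ln(\lambda)$ side produces $\ln(\lambda)$. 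These differ: for $\alpha=0$ one computes $\tau^{0}_{x}(\ln)(\lambda)=\ln\max(x,\lambda)$ by the mean-value property of $\ln|\cdot|$ in the plane. What survives (for $\alpha\geq0$, by subharmonicity of $\ln|\cdot|$ in $\mathbb{R}^{2\alpha+2}$) is only the one-sided bound $\tau^{\alpha}_{x}(\ln)(\lambda)\geq\ln(\lambda)$, which happens to point the $\ln(s)$-term in the direction needed for (34) but requires an argument you have not supplied and is not among your listed tools. You should know that the paper's own proof of (38) makes exactly the same leap, invoking (10)/(11) to claim $\int_{0}^{+\infty}\ln(s)\,\tau^{\alpha}_{\omega}|\mathcal{H}_{\alpha}(f)|^{2}(s)\,d\gamma_{\alpha}(s)=\int_{0}^{+\infty}\ln(s)\,|\mathcal{H}_{\alpha}(f)(s)|^{2}\,d\gamma_{\alpha}(s)$; so you have located the crux correctly, but flagging it as ``the main obstacle'' is not the same as removing it, and as written your proposal asserts an equality that is false in general.
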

\begin{proof}
We can replace the logarithmic uncertainty principle for the Hankel transform to the function $W^{\alpha}_{g}(f)(k,s)$, then we have
\begin{equation*}
		\begin{split}
		\int^{+\infty}_{0}\ln(k)|W^{\alpha}_{g}(f)(k,s)|^{2}\emph{d}\gamma_{\alpha}(k)&+
\int^{+\infty}_{0}\ln(\lambda)|\mathcal{H}_{\alpha}(W^{\alpha}_{g}(f)(\cdot,s))(\lambda)|^{2}\emph{d}\gamma_{\alpha}(\lambda)\\
&\geq\left( \psi\left( \frac{\alpha+1}{2}\right)+\ln(2)\right)\int^{+\infty}_{0}|W^{\alpha}_{g}(f)(k,s)|^{2}\emph{d}\gamma_{\alpha}(k)
		\end{split}
	\end{equation*}
Integrating both sides with respect to $s$, we have
\begin{equation}
		\begin{split}
		\int^{+\infty}_{0}\int^{+\infty}_{0}\ln(k)|W^{\alpha}_{g}(f)(k,s)|^{2}\emph{d}\nu_{\alpha}(k,s)&+
\int^{+\infty}_{0}\int^{+\infty}_{0}\ln(\lambda)|\mathcal{H}_{\alpha}(W^{\alpha}_{g}(f)(\cdot,s))
(\lambda)|^{2}\emph{d}\nu_{\alpha}
(\lambda,s)\\
&\geq\left( \psi\left( \frac{\alpha+1}{2}\right)+\ln(2)\right)\int^{+\infty}_{0}\int^{+\infty}_{0}
|W^{\alpha}_{g}(f)(k,s)|^{2}\emph{d}\nu_{\alpha}(k,s)
		\end{split}
	\end{equation}

By (12) and (17), we obtain
\begin{equation}
		\begin{split}
\int^{+\infty}_{0}\int^{+\infty}_{0}\ln(\lambda)|\mathcal{H}_{\alpha}(W^{\alpha}_{g}(f)(\cdot,s))
(\lambda)|^{2}\emph{d}\nu_{\alpha}
(\lambda,s)
=\|g\|^{2}_{L^{2}_{\alpha}(\mathbb{R_{+}})}\int^{+\infty}_{0}\ln(\lambda)
|\mathcal{H}_{\alpha}(f)(\lambda)|^{2}\emph{d}\gamma_{\alpha}(\lambda)
		\end{split}
	\end{equation}
Hence, by (20) we have
\begin{equation}
		\begin{split} \int^{+\infty}_{0}\int^{+\infty}_{0}\ln(k)|W^{\alpha}_{g}(f)(k,s)|^{2}\emph{d}\nu_{\alpha}(k,s)&+
\|g\|^{2}_{L^{2}_{\alpha}(\mathbb{R_{+}})}\int^{+\infty}_{0}\ln(\lambda)|\mathcal{H}_{\alpha}(f)(\lambda)|^{2}
\emph{d}\gamma_{\alpha}
(\lambda)\\
&\geq\left( \psi\left( \frac{\alpha+1}{2}\right)+\ln(2)\right)\|f\|^{2}_{L^{2}_{\alpha}(\mathbb{R_{+}})}\|g\|^{2}_{L^{2}_{\alpha}
(\mathbb{R_{+}})}
		\end{split}
	\end{equation}
On the other hand, by (5) and (12) we have
\begin{equation*}
		\begin{split} \int^{+\infty}_{0}\int^{+\infty}_{0}\ln(s)|W^{\alpha}_{f}(g)(k,s)|^{2}\emph{d}\nu_{\alpha}(k,s)&=
\int^{+\infty}_{0}\ln(s)\left[\int^{+\infty}_{0}|\mathcal{H}_{\alpha}[W^{\alpha}_{f}(g)(\cdot,s)](\omega)|^{2}
\emph{d}\gamma_{\alpha}(\omega)\right]\emph{d}\gamma_{\alpha}(s)\\
&=\int^{+\infty}_{0}\ln(s)\left[\int^{+\infty}_{0}|\mathcal{H}_{\alpha}(g)(\omega)|^{2}
\tau^{\alpha}_{s}|\mathcal{H}_{\alpha}(f)(\omega)|^{2}
\emph{d}\gamma_{\alpha}(\omega)\right]\emph{d}\gamma_{\alpha}(s)\\
&=\int^{+\infty}_{0}|\mathcal{H}_{\alpha}(g)(\omega)|^{2}\left[\int^{+\infty}_{0}\ln(s)
\tau^{\alpha}_{\omega}|\mathcal{H}_{\alpha}(f)(s)|^{2}
\emph{d}\gamma_{\alpha}(s)\right]\emph{d}\gamma_{\alpha}(\omega)\\
		\end{split}
	\end{equation*}
According to (10), we have
\begin{equation*}
		\begin{split}
\int^{+\infty}_{0}\ln(s)\tau^{\alpha}_{\omega}|\mathcal{H}_{\alpha}(f)|^{2}(s)
\emph{d}\gamma_{\alpha}(s)=\int^{+\infty}_{0}
\ln(s)|\mathcal{H}_{\alpha}(f)|^{2}(s)
\emph{d}\gamma_{\alpha}(s)\\
		\end{split}
	\end{equation*}
Hence
\begin{equation}
		\begin{split}
\int^{+\infty}_{0}\int^{+\infty}_{0}\ln(s)|W^{\alpha}_{f}(g)(k,s)|^{2}\emph{d}\nu_{\alpha}(k,s)&
=\int^{+\infty}_{0}|\mathcal{H}_{\alpha}(g)(\omega)|^{2}\emph{d}\gamma_{\alpha}(\omega)
\int^{+\infty}_{0}\ln(s)|\mathcal{H}_{\alpha}(f)|^{2}(s)\emph{d}\gamma_{\alpha}(s)\\
&=\|g\|^{2}_{L^{2}_{\alpha}(\mathbb{R_{+}})}\int^{+\infty}_{0}\ln(s)
|\mathcal{H}_{\alpha}(f)(s)|^{2}\emph{d}\gamma_{\alpha}(s)
		\end{split}
	\end{equation}
Which completes the proof.
\end{proof}
In the paper \cite{[16]},  Soltani gave explicitly the constant $b$ in the case $c\geq1$ and $d\geq1$,
more precisely he established the following Heisenberg-type inequalities for the Hankel transform.
\begin{pro}
Assume $c\geq1$ and $d\geq1$, then for every function $f\in L^{2}_{\alpha}(\mathbb{R_{+}})$, we have
\begin{equation}
		\begin{split}
\|t^{d}f\|^{c/(c+d)}_{L^{2}_{\alpha}(\mathbb{R_{+}})}
\|\lambda^{c}\mathcal{H}_{\alpha}(f)\|^{d/(c+d)}_{L^{2}_{\alpha}(\mathbb{R_{+}})}\geq (\alpha+1)^{cd/(c+d)}\|f\|_{L^{2}_{\alpha}(\mathbb{R_{+}})}
		\end{split}
	\end{equation}
with equality if and only if $c=d=1$ and $f(t)=pe^{-qt^{2}/2}$ for some $p\in\mathbb{C}$ and $q>0$.
\end{pro}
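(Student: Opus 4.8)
The plan is to derive the general weighted inequality~(41) from the sharp $L^{2}$ Heisenberg inequality for the Hankel transform (the case $c=d=1$, which is the classical Heisenberg--Hankel inequality of Bowie [29]) and then to propagate it to arbitrary exponents $c,d\ge1$ by H\"older interpolation in the weights. Throughout one may assume $\|t^{d}f\|_{L^{2}_{\alpha}(\mathbb{R_{+}})}$ and $\|\lambda^{c}\mathcal{H}_{\alpha}(f)\|_{L^{2}_{\alpha}(\mathbb{R_{+}})}$ are finite, since otherwise there is nothing to prove.

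\emph{Step 1: the base case $c=d=1$.} The aim here is
\[
\|tf\|_{L^{2}_{\alpha}(\mathbb{R_{+}})}\,\|\lambda\mathcal{H}_{\alpha}(f)\|_{L^{2}_{\alpha}(\mathbb{R_{+}})}\ge(\alpha+1)\,\|f\|^{2}_{L^{2}_{\alpha}(\mathbb{R_{+}})}.
\]
I would use that the Hankel transform diagonalises the Bessel operator $\Delta_{\alpha}=\frac{d^{2}}{dt^{2}}+\frac{2\alpha+1}{t}\frac{d}{dt}=t^{-(2\alpha+1)}\frac{d}{dt}\!\left(t^{2\alpha+1}\frac{d}{dt}\right)$, i.e.\ $\mathcal{H}_{\alpha}(\Delta_{\alpha}f)(\lambda)=-\lambda^{2}\mathcal{H}_{\alpha}(f)(\lambda)$. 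Integration by parts against the weight $t^{2\alpha+1}$ together with the Parseval equality~(5) gives $\int_{0}^{\infty}|f'(t)|^{2}\,d\gamma_{\alpha}(t)=\int_{0}^{\infty}\lambda^{2}|\mathcal{H}_{\alpha}(f)(\lambda)|^{2}\,d\gamma_{\alpha}(\lambda)$. On the other hand, integrating the elementary identity $\frac{d}{dt}\!\left(t^{2\alpha+2}|f(t)|^{2}\right)=(2\alpha+2)\,t^{2\alpha+1}|f(t)|^{2}+2\,t^{2\alpha+2}\,\mathrm{Re}\big(f'(t)\overline{f(t)}\big)$ over $(0,\infty)$, the boundary term vanishing, and applying the Cauchy--Schwarz inequality yields $(\alpha+1)\|f\|^{2}_{L^{2}_{\alpha}}\le\|tf\|_{L^{2}_{\alpha}}\,\|f'\|_{L^{2}_{\alpha}}$. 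Combining the two displays gives the base case; one first carries this out for $f$ smooth and rapidly decaying, then extends by density.

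\emph{Step 2: interpolation in the weights.} For $d\ge1$, H\"older's inequality with exponents $d$ and $d/(d-1)$ (trivial when $d=1$) gives, via $t^{2}|f|^{2}=(t^{2d}|f|^{2})^{1/d}(|f|^{2})^{1-1/d}$, the bound $\|tf\|^{2}_{L^{2}_{\alpha}}\le\|t^{d}f\|^{2/d}_{L^{2}_{\alpha}}\,\|f\|^{2-2/d}_{L^{2}_{\alpha}}$, and, using the Parseval equality~(5) so that $\|\mathcal{H}_{\alpha}(f)\|_{L^{2}_{\alpha}}=\|f\|_{L^{2}_{\alpha}}$, likewise $\|\lambda\mathcal{H}_{\alpha}(f)\|^{2}_{L^{2}_{\alpha}}\le\|\lambda^{c}\mathcal{H}_{\alpha}(f)\|^{2/c}_{L^{2}_{\alpha}}\,\|f\|^{2-2/c}_{L^{2}_{\alpha}}$. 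Substituting these two bounds into the base case, collecting the powers of $\|f\|_{L^{2}_{\alpha}}$ (and using $(\frac1c+\frac1d)\frac{cd}{c+d}=1$), and raising both sides to the power $\frac{cd}{c+d}$ produces exactly~(41).

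\emph{Step 3: the equality case, and the main obstacle.} Equality in~(41) forces equality in each step used. The two H\"older estimates are equalities only if $c=d=1$, since for $d>1$ equality would demand $t^{2d}|f(t)|^{2}$ proportional to $|f(t)|^{2}$ a.e., which is impossible for a nonzero $f$ on $\mathbb{R_{+}}$. With $c=d=1$, equality in the Cauchy--Schwarz step of Step~1 forces $f'(t)=-q\,t\,f(t)$ for a constant $q$, hence $f(t)=p\,e^{-qt^{2}/2}$, and square-integrability against $d\gamma_{\alpha}$ forces $q>0$; conversely this $f$ attains equality, since $\mathcal{H}_{\alpha}$ sends Gaussians to Gaussians. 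The genuinely delicate part is Step~1: justifying rigorously the vanishing of the boundary term $t^{2\alpha+2}|f(t)|^{2}$ as $t\to0^{+}$ and $t\to+\infty$, and the identity $\|f'\|_{L^{2}_{\alpha}}=\|\lambda\mathcal{H}_{\alpha}(f)\|_{L^{2}_{\alpha}}$, for every admissible $f$; this is handled by proving everything first on a convenient dense subspace of $L^{2}_{\alpha}(\mathbb{R_{+}})$ and then passing to the limit, after which Steps~2 and~3 are routine.
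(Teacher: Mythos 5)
The paper does not actually prove this Proposition: it is imported as a known result from Soltani [16] (with the base case $c=d=1$ going back to Bowie [29] and R\"osler [28]), so there is no in-paper argument to compare yours against. Your proposal supplies a genuine proof, and it follows the standard route of that literature: establish the sharp inequality $\|tf\|_{L^{2}_{\alpha}(\mathbb{R_{+}})}\,\|\lambda\mathcal{H}_{\alpha}(f)\|_{L^{2}_{\alpha}(\mathbb{R_{+}})}\geq(\alpha+1)\|f\|^{2}_{L^{2}_{\alpha}(\mathbb{R_{+}})}$ via the Bessel operator $\Delta_{\alpha}$, integration by parts against $t^{2\alpha+1}$ and Parseval, then pass to general $c,d\geq1$ by H\"older in the weights; the exponent bookkeeping is correct, since $(\frac{1}{c}+\frac{1}{d})\frac{cd}{c+d}=1$ turns $(\alpha+1)\|f\|^{1/c+1/d}\leq\|t^{d}f\|^{1/d}\|\lambda^{c}\mathcal{H}_{\alpha}(f)\|^{1/c}$ into exactly the stated inequality. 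Your equality analysis is also sound, though you should state the frequency-side strictness for $c>1$ symmetrically to the $d>1$ case (equality there would force $\lambda^{2c}|\mathcal{H}_{\alpha}(f)(\lambda)|^{2}$ proportional to $|\mathcal{H}_{\alpha}(f)(\lambda)|^{2}$, equally impossible for nonzero $f$). The only real work you defer is the one you flag yourself: justifying the vanishing boundary terms and the identity $\|f'\|_{L^{2}_{\alpha}(\mathbb{R_{+}})}=\|\lambda\mathcal{H}_{\alpha}(f)\|_{L^{2}_{\alpha}(\mathbb{R_{+}})}$ on a dense subspace and passing to the limit when the right-hand norms are finite; this is routine but must be done, since for general $f\in L^{2}_{\alpha}(\mathbb{R_{+}})$ the integration by parts is only formal. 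Neither point is a gap in the idea; your argument is a legitimate self-contained substitute for the citation.
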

In the paper \cite{[8]}, the authors gave the following Heisenberg-type uncertainty inequality for the windowed Hankel
transform of magnitude $c\geq 1$.
\begin{pro}
Let $c\geq1$, then for every function $f, g\in L^{2}_{\alpha}(\mathbb{R_{+}})$,
\begin{equation}
		\begin{split}
\|k^{c}W^{\alpha}_{g}(f)\|_{L^{2}_{\alpha}(\mathbb{R_{+}}\times \mathbb{R_{+}^{*}})}\|s^{c}W^{\alpha}_{g}(f)\|_{L^{2}_{\alpha}(\mathbb{R_{+}}\times \mathbb{R_{+}^{*}})}\geq
 (\alpha+1)^{c}\|g\|^{2}_{L^{2}_{\alpha}(\mathbb{R_{+}})}\|f\|^{2}_{L^{2}_{\alpha}(\mathbb{R_{+}})}
		\end{split}
	\end{equation}
\end{pro}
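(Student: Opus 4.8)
The plan is to reduce this two--dimensional estimate to the one--dimensional Heisenberg--type inequality for the Hankel transform recalled above, taken with $c=d$, i.e. to
\begin{equation*}
\|t^{c}u\|_{L^{2}_{\alpha}(\mathbb{R}_{+})}\,\|\lambda^{c}\mathcal{H}_{\alpha}(u)\|_{L^{2}_{\alpha}(\mathbb{R}_{+})}\ \ge\ (\alpha+1)^{c}\,\|u\|^{2}_{L^{2}_{\alpha}(\mathbb{R}_{+})},\qquad u\in L^{2}_{\alpha}(\mathbb{R}_{+}),
\end{equation*}
combined with the fact that, in its first variable, $W^{\alpha}_{g}(f)$ has a factorized Hankel transform. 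If either norm on the left of the asserted inequality is infinite there is nothing to prove, so I would assume both finite, which legitimizes the interchanges of integration used below. The key identity I would first establish is this: fixing $s$, formula (18) gives $W^{\alpha}_{g}(f)(\cdot,s)=f\sharp_{\alpha}\overline{\mathcal{M}^{\alpha}_{s}g}$, hence by the convolution property (12), $\mathcal{H}_{\alpha}(W^{\alpha}_{g}(f)(\cdot,s))=\mathcal{H}_{\alpha}(f)\,\mathcal{H}_{\alpha}(\overline{\mathcal{M}^{\alpha}_{s}g})$; since $j_{\alpha}$ is real one has $\mathcal{H}_{\alpha}(\overline{h})=\overline{\mathcal{H}_{\alpha}(h)}$, and since the inversion formula makes $\mathcal{H}_{\alpha}$ its own inverse, the definition (14) gives $\mathcal{H}_{\alpha}(\mathcal{M}^{\alpha}_{s}g)=\sqrt{\tau^{\alpha}_{s}|\mathcal{H}_{\alpha}(g)|^{2}}\ge0$, so that
\begin{equation*}
\bigl|\mathcal{H}_{\alpha}\bigl(W^{\alpha}_{g}(f)(\cdot,s)\bigr)(\lambda)\bigr|^{2}=|\mathcal{H}_{\alpha}(f)(\lambda)|^{2}\;\tau^{\alpha}_{s}|\mathcal{H}_{\alpha}(g)|^{2}(\lambda),
\end{equation*}
and, by Parseval (5) in the first variable, $\int_{0}^{\infty}|W^{\alpha}_{g}(f)(k,s)|^{2}\,d\gamma_{\alpha}(k)=\int_{0}^{\infty}|\mathcal{H}_{\alpha}(f)(\lambda)|^{2}\,\tau^{\alpha}_{s}|\mathcal{H}_{\alpha}(g)|^{2}(\lambda)\,d\gamma_{\alpha}(\lambda)$ for every $s$.

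For the factor $\|k^{c}W^{\alpha}_{g}(f)\|$, I would apply the displayed one--dimensional inequality to $u=W^{\alpha}_{g}(f)(\cdot,s)$ and integrate this inequality over $s$ against $d\gamma_{\alpha}(s)$: by Plancherel (20) the right--hand side integrates to $(\alpha+1)^{c}\|f\|^{2}_{L^{2}_{\alpha}(\mathbb{R}_{+})}\|g\|^{2}_{L^{2}_{\alpha}(\mathbb{R}_{+})}$, while Cauchy--Schwarz in $s$ bounds the integrated left--hand side by $\|k^{c}W^{\alpha}_{g}(f)\|_{L^{2}_{\alpha}(\mathbb{R}_{+}\times\mathbb{R}_{+}^{*})}$ times the square root of $\int_{0}^{\infty}\!\int_{0}^{\infty}\lambda^{2c}|\mathcal{H}_{\alpha}(W^{\alpha}_{g}(f)(\cdot,s))(\lambda)|^{2}\,d\gamma_{\alpha}(\lambda)\,d\gamma_{\alpha}(s)$. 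By the key identity, Fubini, the symmetry $\tau^{\alpha}_{s}h(\lambda)=\tau^{\alpha}_{\lambda}h(s)$ from (10), the invariance (11) (with $|\mathcal{H}_{\alpha}(g)|^{2}$ in place of $f$) and Parseval, this last double integral equals $\|g\|^{2}_{L^{2}_{\alpha}(\mathbb{R}_{+})}\,\|\lambda^{c}\mathcal{H}_{\alpha}(f)\|^{2}_{L^{2}_{\alpha}(\mathbb{R}_{+})}$ --- exactly the computation already used in the proof of the logarithmic uncertainty principle, with $\lambda^{2c}$ in place of $\ln\lambda$. Consequently
\begin{equation*}
\|k^{c}W^{\alpha}_{g}(f)\|_{L^{2}_{\alpha}(\mathbb{R}_{+}\times\mathbb{R}_{+}^{*})}\;\|g\|_{L^{2}_{\alpha}(\mathbb{R}_{+})}\;\|\lambda^{c}\mathcal{H}_{\alpha}(f)\|_{L^{2}_{\alpha}(\mathbb{R}_{+})}\ \ge\ (\alpha+1)^{c}\|f\|^{2}_{L^{2}_{\alpha}(\mathbb{R}_{+})}\|g\|^{2}_{L^{2}_{\alpha}(\mathbb{R}_{+})}.
\end{equation*}

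For the factor $\|s^{c}W^{\alpha}_{g}(f)\|$, I would instead multiply the first--variable marginal of the key identity by $s^{2c}$, integrate, and use Fubini and (10) to get
\begin{equation*}
\|s^{c}W^{\alpha}_{g}(f)\|^{2}_{L^{2}_{\alpha}(\mathbb{R}_{+}\times\mathbb{R}_{+}^{*})}=\int_{0}^{\infty}|\mathcal{H}_{\alpha}(f)(\lambda)|^{2}\left(\int_{0}^{\infty}s^{2c}\,\tau^{\alpha}_{\lambda}|\mathcal{H}_{\alpha}(g)|^{2}(s)\,d\gamma_{\alpha}(s)\right)d\gamma_{\alpha}(\lambda).
\end{equation*}
Here I would invoke the moment estimate $\int_{0}^{\infty}s^{2c}\,\tau^{\alpha}_{\lambda}\phi(s)\,d\gamma_{\alpha}(s)\ge\lambda^{2c}\int_{0}^{\infty}\phi\,d\gamma_{\alpha}$, valid for $\phi\ge0$ and $c\ge1$: since the kernel in (8) is symmetric, $\tau^{\alpha}_{\lambda}$ is self--adjoint for $d\gamma_{\alpha}$, so the left side equals $\int_{0}^{\infty}\phi(x)\,\tau^{\alpha}_{\lambda}((\cdot)^{2c})(x)\,d\gamma_{\alpha}(x)$, and by the integral representation (7), the elementary identity $\tau^{\alpha}_{\lambda}((\cdot)^{2})(x)=x^{2}+\lambda^{2}$ and Jensen's inequality (convexity of $u\mapsto u^{c}$) one has $\tau^{\alpha}_{\lambda}((\cdot)^{2c})(x)\ge(x^{2}+\lambda^{2})^{c}\ge\lambda^{2c}$ --- the endpoint $\alpha=-1/2$ being handled identically through the corresponding cosine translation. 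Taking $\phi=|\mathcal{H}_{\alpha}(g)|^{2}$ and using Parseval then gives $\|s^{c}W^{\alpha}_{g}(f)\|_{L^{2}_{\alpha}(\mathbb{R}_{+}\times\mathbb{R}_{+}^{*})}\ge\|g\|_{L^{2}_{\alpha}(\mathbb{R}_{+})}\,\|\lambda^{c}\mathcal{H}_{\alpha}(f)\|_{L^{2}_{\alpha}(\mathbb{R}_{+})}$.

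Finally, since every factor is nonnegative, I would multiply this last inequality by $\|k^{c}W^{\alpha}_{g}(f)\|_{L^{2}_{\alpha}(\mathbb{R}_{+}\times\mathbb{R}_{+}^{*})}$ and insert the estimate of the second paragraph:
\begin{equation*}
\|k^{c}W^{\alpha}_{g}(f)\|_{L^{2}_{\alpha}(\mathbb{R}_{+}\times\mathbb{R}_{+}^{*})}\,\|s^{c}W^{\alpha}_{g}(f)\|_{L^{2}_{\alpha}(\mathbb{R}_{+}\times\mathbb{R}_{+}^{*})}\ \ge\ (\alpha+1)^{c}\|f\|^{2}_{L^{2}_{\alpha}(\mathbb{R}_{+})}\|g\|^{2}_{L^{2}_{\alpha}(\mathbb{R}_{+})},
\end{equation*}
which is the assertion. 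I expect the moment estimate of the preceding paragraph to be the main obstacle: it is the only place the explicit form of the Hankel translation intervenes, via $\tau^{\alpha}_{\lambda}((\cdot)^{2})(x)=x^{2}+\lambda^{2}$ and convexity, and one must check it holds across the whole range $\alpha\ge-1/2$. Everything else is the routine device of applying Cauchy--Schwarz after integrating the one--dimensional inequality over the second variable; the one point demanding care is the direction of the chain --- the $s$--estimate is a \emph{lower} bound on $\|s^{c}W^{\alpha}_{g}(f)\|$ whereas the $k$--estimate comes from an \emph{upper} (Cauchy--Schwarz) bound on the integrated one--dimensional left side, so the two partial estimates have to be multiplied, not added.
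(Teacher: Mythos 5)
The first thing to note is that the paper does not prove this Proposition at all: it is quoted from reference [8] (Ghobber--Omri) as a known result, and the paper then uses its case $c=1$, inequality (41), as the input to Theorem 8. So there is no in-paper proof to compare against; your proposal supplies an argument the paper omits. Having checked it, the argument is correct and self-contained given Proposition 4 (Soltani's one-dimensional inequality, taken with $c=d$) and identities already used elsewhere in the paper. The three ingredients all hold up: (i) the factorization $|\mathcal{H}_{\alpha}(W^{\alpha}_{g}(f)(\cdot,s))(\lambda)|^{2}=|\mathcal{H}_{\alpha}(f)(\lambda)|^{2}\,\tau^{\alpha}_{s}|\mathcal{H}_{\alpha}(g)|^{2}(\lambda)$ is exactly the identity the paper itself exploits (with the roles of $f$ and $g$ interchanged) in the proof of Theorem 6; (ii) the Cauchy--Schwarz-after-integration step for the $k$-factor is sound, and the resulting double integral does collapse to $\|g\|^{2}_{L^{2}_{\alpha}(\mathbb{R_{+}})}\,\|\lambda^{c}\mathcal{H}_{\alpha}(f)\|^{2}_{L^{2}_{\alpha}(\mathbb{R_{+}})}$ via (10), (11) and Parseval; (iii) the moment estimate $\tau^{\alpha}_{\lambda}\bigl((\cdot)^{2c}\bigr)(x)\ge(x^{2}+\lambda^{2})^{c}\ge\lambda^{2c}$ follows from $\tau^{\alpha}_{\lambda}((\cdot)^{2})(x)=x^{2}+\lambda^{2}$ (the $\cos\theta$ term integrates to zero against $(\sin\theta)^{2\alpha}$) together with Jensen's inequality for the probability measure underlying (7), with the endpoint $\alpha=-1/2$ handled by the cosine translation as you indicate; the self-adjointness of $\tau^{\alpha}_{\lambda}$ needed there is guaranteed by the symmetry of the kernel in (8). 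You are also right that the two partial estimates point in opposite directions and must be multiplied rather than added, and your chain does this correctly while disposing of the trivial cases ($f=0$ or an infinite left-hand norm). One cosmetic remark: you cite (20) for Plancherel where the paper's display is (19), but the paper itself is inconsistent on this numbering, so nothing hinges on it.
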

When $c=1$, the uncertainty inequality become the following uncertainty inequality:
\begin{equation}
		\begin{split}
\|kW^{\alpha}_{g}(f)\|_{L^{2}_{\alpha}(\mathbb{R_{+}}\times \mathbb{R_{+}^{*}})}\|sW^{\alpha}_{g}(f)\|_{L^{2}_{\alpha}(\mathbb{R_{+}}\times \mathbb{R_{+}^{*}})}\geq
 (\alpha+1)\|g\|^{2}_{L^{2}_{\alpha}(\mathbb{R_{+}})}\|f\|^{2}_{L^{2}_{\alpha}(\mathbb{R_{+}})}
		\end{split}
	\end{equation}

Now we investigate the Heisenberg-type uncertainty inequality for the windowed Hankel
transform.
\begin{thm} (Heisenberg-type uncertainty inequality for the windowed Hankel
transform)
Assume $c,d\geq1$ and let $g \in L^{2}_{\alpha}(\mathbb{R_{+}})$ be a non-zero window function. For every non-zero function $f\in L^{2}_{\alpha}(\mathbb{R_{+}})$,
then there exits a constant $b=(\alpha,c,d)>0$,
such that
\begin{equation}
		\begin{split}
\|k^{c}W^{\alpha}_{g}(f)\|^{d/(c+d)}_{L^{2}_{\alpha}(\mathbb{R_{+}}\times \mathbb{R_{+}^{*}})}\|s^{d}W^{\alpha}_{g}(f)\|^{c/(c+d)}_{L^{2}_{\alpha}(\mathbb{R_{+}}\times \mathbb{R_{+}^{*}})}\geq
(\alpha+1)^{cd/(c+d)}\|g\|_{L^{2}_{\alpha}(\mathbb{R_{+}})}\|f\|_{L^{2}_{\alpha}(\mathbb{R_{+}})}
		\end{split}
	\end{equation}
\end{thm}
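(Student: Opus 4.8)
The plan is to transfer Soltani's Heisenberg--Hankel inequality (Proposition 5) from a single function to the ``time slices'' $k\mapsto W^{\alpha}_{g}(f)(k,s)$ of the windowed Hankel transform and then to integrate the resulting one--parameter family of inequalities in $s$, using Plancherel's formula for $W^{\alpha}_{g}$ (Proposition 1(2)) on one side and H\"older's inequality on the other. We may assume that both weighted norms on the left of the asserted inequality are finite, since otherwise there is nothing to prove; by Plancherel and Tonelli this already forces each slice $h_{s}:=W^{\alpha}_{g}(f)(\cdot,s)$ to lie in $L^{2}_{\alpha}(\mathbb{R}_{+})$ for almost every $s$.

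First I would fix $s$ and apply Proposition 5 to $h_{s}$ in the variable $k$, but with the roles of $c$ and $d$ interchanged (legitimate because $c,d\geq1$ and the exponent $(\alpha+1)^{cd/(c+d)}$ is symmetric in $c,d$); this gives, for almost every $s$,
\[
\Big(\int_{0}^{\infty}k^{2c}|W^{\alpha}_{g}(f)(k,s)|^{2}\,d\gamma_{\alpha}(k)\Big)^{\!\frac{d}{2(c+d)}}\Big(\int_{0}^{\infty}\lambda^{2d}|\mathcal{H}_{\alpha}(h_{s})(\lambda)|^{2}\,d\gamma_{\alpha}(\lambda)\Big)^{\!\frac{c}{2(c+d)}}\geq(\alpha+1)^{\frac{cd}{c+d}}\Big(\int_{0}^{\infty}|W^{\alpha}_{g}(f)(k,s)|^{2}\,d\gamma_{\alpha}(k)\Big)^{\!\frac12}.
\]
Squaring and integrating in $s$ against $d\gamma_{\alpha}(s)$, the right-hand side becomes $(\alpha+1)^{2cd/(c+d)}\,\|W^{\alpha}_{g}(f)\|^{2}_{L^{2}_{\alpha}(\mathbb{R}_{+}\times\mathbb{R}_{+}^{*})}=(\alpha+1)^{2cd/(c+d)}\,\|f\|^{2}_{L^{2}_{\alpha}(\mathbb{R}_{+})}\|g\|^{2}_{L^{2}_{\alpha}(\mathbb{R}_{+})}$ by Plancherel, while on the left H\"older's inequality in $s$ with conjugate exponents $(c+d)/d$ and $(c+d)/c$ separates the two $s$-integrals, the first being $\|k^{c}W^{\alpha}_{g}(f)\|^{2}_{L^{2}_{\alpha}(\mathbb{R}_{+}\times\mathbb{R}_{+}^{*})}$. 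Taking square roots, the theorem thus reduces to the single domination
\[
\int_{0}^{\infty}\!\int_{0}^{\infty}\lambda^{2d}\,|\mathcal{H}_{\alpha}\!\big(W^{\alpha}_{g}(f)(\cdot,s)\big)(\lambda)|^{2}\,d\gamma_{\alpha}(\lambda)\,d\gamma_{\alpha}(s)\ \leq\ \int_{0}^{\infty}\!\int_{0}^{\infty}s^{2d}\,|W^{\alpha}_{g}(f)(k,s)|^{2}\,d\nu_{\alpha}(k,s).
\]

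This last estimate is the only real obstacle, and I would settle it by evaluating both sides in closed form in terms of $|\mathcal{H}_{\alpha}(f)|^{2}$ and $|\mathcal{H}_{\alpha}(g)|^{2}$. Writing $W^{\alpha}_{g}(f)(\cdot,s)=f\,\sharp_{\alpha}\,\overline{\mathcal{M}^{\alpha}_{s}g}$ and combining the convolution identity $\mathcal{H}_{\alpha}(f\sharp_{\alpha}h)=\mathcal{H}_{\alpha}(f)\mathcal{H}_{\alpha}(h)$, the definition $\mathcal{M}^{\alpha}_{s}g=\mathcal{H}_{\alpha}\big(\sqrt{\tau^{\alpha}_{s}|\mathcal{H}_{\alpha}(g)|^{2}}\big)$, the inversion formula (so $\mathcal{H}_{\alpha}(\mathcal{M}^{\alpha}_{s}g)=\sqrt{\tau^{\alpha}_{s}|\mathcal{H}_{\alpha}(g)|^{2}}$) and the reality of the kernel $j_{\alpha}$, one gets $|\mathcal{H}_{\alpha}(W^{\alpha}_{g}(f)(\cdot,s))(\lambda)|^{2}=|\mathcal{H}_{\alpha}(f)(\lambda)|^{2}\,\tau^{\alpha}_{s}|\mathcal{H}_{\alpha}(g)|^{2}(\lambda)$; integrating in $s$ and using the symmetry $\tau^{\alpha}_{s}(u)(\lambda)=\tau^{\alpha}_{\lambda}(u)(s)$, the mass preservation $\int_{0}^{\infty}\tau^{\alpha}_{\lambda}(u)\,d\gamma_{\alpha}=\int_{0}^{\infty}u\,d\gamma_{\alpha}$ and Parseval, the left-hand side above collapses to $\|g\|^{2}_{L^{2}_{\alpha}(\mathbb{R}_{+})}\,\|\lambda^{d}\mathcal{H}_{\alpha}(f)\|^{2}_{L^{2}_{\alpha}(\mathbb{R}_{+})}$. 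For the right-hand side I would use Parseval in $k$, interchange the order of integration (all integrands are nonnegative), use $\tau^{\alpha}_{s}(u)(\lambda)=\tau^{\alpha}_{\lambda}(u)(s)$ once more, and then the self-adjointness of $\tau^{\alpha}_{\lambda}$ with respect to $d\gamma_{\alpha}$ (its kernel $K$ is symmetric in all three arguments), to reach
\[
\int_{0}^{\infty}\!\int_{0}^{\infty}s^{2d}\,|W^{\alpha}_{g}(f)(k,s)|^{2}\,d\nu_{\alpha}(k,s)=\int_{0}^{\infty}\!\int_{0}^{\infty}|\mathcal{H}_{\alpha}(f)(\lambda)|^{2}\,|\mathcal{H}_{\alpha}(g)(x)|^{2}\,\big(\tau^{\alpha}_{\lambda}t^{2d}\big)(x)\,d\gamma_{\alpha}(x)\,d\gamma_{\alpha}(\lambda),
\]
where $\tau^{\alpha}_{\lambda}t^{2d}$ denotes the Hankel translate of $t\mapsto t^{2d}$. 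Finally, the integral representation of $\tau^{\alpha}_{\lambda}$ realizes $(\tau^{\alpha}_{\lambda}t^{2d})(x)$ as the mean of $\big(x^{2}+\lambda^{2}+2x\lambda\cos\theta\big)^{d}$ against a probability measure proportional to $(\sin\theta)^{2\alpha}\,d\theta$ on $[0,\pi]$; since $v\mapsto v^{d}$ is convex for $d\geq1$ and $\int_{0}^{\pi}\cos\theta\,(\sin\theta)^{2\alpha}\,d\theta=0$, Jensen's inequality gives $(\tau^{\alpha}_{\lambda}t^{2d})(x)\geq(x^{2}+\lambda^{2})^{d}\geq\lambda^{2d}$, hence $\|s^{d}W^{\alpha}_{g}(f)\|^{2}_{L^{2}_{\alpha}(\mathbb{R}_{+}\times\mathbb{R}_{+}^{*})}\geq\|g\|^{2}_{L^{2}_{\alpha}(\mathbb{R}_{+})}\,\|\lambda^{d}\mathcal{H}_{\alpha}(f)\|^{2}_{L^{2}_{\alpha}(\mathbb{R}_{+})}$, which is exactly the domination needed. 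Substituting this into the chain obtained from Proposition 5, H\"older and Plancherel and taking square roots gives the assertion. (When $\alpha=-\tfrac12$ the same elementary bound $\tfrac12\big[(x+\lambda)^{2d}+(x-\lambda)^{2d}\big]\geq\lambda^{2d}$ replaces the Jensen step.)
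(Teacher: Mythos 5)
Your argument is correct, but it takes a genuinely different route from the paper's. The paper never decomposes $W^{\alpha}_{g}(f)$ into time slices: it works entirely on the phase space $\mathbb{R}_{+}\times\mathbb{R}_{+}^{*}$, using the H\"older interpolation bound
\begin{equation*}
\|k^{c}W^{\alpha}_{g}(f)\|^{1/c}_{L^{2}_{\alpha}(\mathbb{R_{+}}\times \mathbb{R_{+}^{*}})}\geq
\frac{\|kW^{\alpha}_{g}(f)\|_{L^{2}_{\alpha}(\mathbb{R_{+}}\times \mathbb{R_{+}^{*}})}}{\|W^{\alpha}_{g}(f)\|^{1-1/c}_{L^{2}_{\alpha}(\mathbb{R_{+}}\times \mathbb{R_{+}^{*}})}}
\end{equation*}
(and its analogue in $s$) to reduce general exponents $c,d\geq1$ to the case $c=d=1$, and then simply invokes the already-quoted Ghobber--Omri inequality (41) together with Plancherel (19). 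You instead apply Soltani's one-dimensional inequality (Proposition 5) to each slice $h_{s}=W^{\alpha}_{g}(f)(\cdot,s)$, integrate in $s$ with H\"older, and then must supply the extra frequency-domination estimate $\|s^{d}W^{\alpha}_{g}(f)\|^{2}_{L^{2}_{\alpha}(\mathbb{R_{+}}\times \mathbb{R_{+}^{*}})}\geq\|g\|^{2}_{L^{2}_{\alpha}(\mathbb{R_{+}})}\|\lambda^{d}\mathcal{H}_{\alpha}(f)\|^{2}_{L^{2}_{\alpha}(\mathbb{R_{+}})}$, which you establish correctly via the factorization $|\mathcal{H}_{\alpha}(h_{s})|^{2}=|\mathcal{H}_{\alpha}(f)|^{2}\,\tau^{\alpha}_{s}|\mathcal{H}_{\alpha}(g)|^{2}$, the symmetry and mass preservation of the generalized translation, and Jensen's inequality (with the separate elementary treatment of $\alpha=-\tfrac12$). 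What your route buys is self-containedness: it needs only Proposition 5 and the structural identities (10)--(12), and in effect re-derives the content of inequality (41) rather than citing it; the cost is that the domination lemma carries all the real work and requires the measure-theoretic care you indicate (finiteness of the slices, nonnegativity for Tonelli). The paper's proof is much shorter but is purely a reduction to the quoted $c=1$ result. Both are valid.
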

\begin{proof}
Let $f, g \in L^{2}_{\alpha}(\mathbb{R_{+}})$ be tow non-zero functions, such that
\begin{equation*}
		\begin{split}
\|k^{c}W^{\alpha}_{g}(f)\|_{L^{2}_{\alpha}(\mathbb{R_{+}}\times \mathbb{R_{+}^{*}})}; \|s^{d}W^{\alpha}_{g}(f)\|_{L^{2}_{\alpha}(\mathbb{R_{+}}\times \mathbb{R_{+}^{*}})}<\infty
		\end{split}
	\end{equation*}
Then, for $c>1$, we have
\begin{equation*}
		\begin{split}
\|k^{c}W^{\alpha}_{g}(f)\|^{1/c}_{L^{2}_{\alpha}(\mathbb{R_{+}}\times \mathbb{R_{+}^{*}})}\|W^{\alpha}_{g}(f)\|^{1/c'}_{L^{2}_{\alpha}(\mathbb{R_{+}}\times \mathbb{R_{+}^{*}})}
=\|k^{2}|W^{\alpha}_{g}(f)|^{2/c}\|^{1/2}_{L^{c}_{\alpha}(\mathbb{R_{+}}\times \mathbb{R_{+}^{*}})}\||W^{\alpha}_{g}(f)|^{2/c'}\|^{1/2}_{L^{c'}_{\alpha}(\mathbb{R_{+}}\times \mathbb{R_{+}^{*}})}
		\end{split}
	\end{equation*}
where $c'$ is defined as usual by $1/c+1/c'=1$.

By H$\ddot{o}$lder's inequality, we obtain
\begin{equation*}
		\begin{split}
\|k^{c}W^{\alpha}_{g}(f)\|^{1/c}_{L^{2}_{\alpha}(\mathbb{R_{+}}\times \mathbb{R_{+}^{*}})}\|W^{\alpha}_{g}(f)\|^{1/c'}_{L^{2}_{\alpha}(\mathbb{R_{+}}\times \mathbb{R_{+}^{*}})}
\geq\|kW^{\alpha}_{g}(f)\|_{L^{2}_{\alpha}(\mathbb{R_{+}}\times \mathbb{R_{+}^{*}})}
		\end{split}
	\end{equation*}
Thus, for all $c\geq 1$, we have
\begin{equation}
		\begin{split}
\|k^{c}W^{\alpha}_{g}(f)\|^{1/c}_{L^{2}_{\alpha}(\mathbb{R_{+}}\times \mathbb{R_{+}^{*}})}\geq
\frac{\|kW^{\alpha}_{g}(f)\|_{L^{2}_{\alpha}(\mathbb{R_{+}}\times \mathbb{R_{+}^{*}})}}{\|W^{\alpha}_{g}(f)\|^{1-1/c}_{L^{2}_{\alpha}(\mathbb{R_{+}}\times \mathbb{R_{+}^{*}})}}
		\end{split}
	\end{equation}
with equality if $c = 1$.

In the same way, we obtain for $d\geq1$,
\begin{equation}
		\begin{split}
\|s^{d}W^{\alpha}_{g}(f)\|^{1/d}_{L^{2}_{\alpha}(\mathbb{R_{+}}\times \mathbb{R_{+}^{*}})}\geq
\frac{\|sW^{\alpha}_{g}(f)\|_{L^{2}_{\alpha}(\mathbb{R_{+}}\times \mathbb{R_{+}^{*}})}}{\|W^{\alpha}_{g}(f)\|^{1-1/d}_{L^{2}_{\alpha}(\mathbb{R_{+}}\times \mathbb{R_{+}^{*}})}}
		\end{split}
	\end{equation}
with equality if $d = 1$.

According to (43) and (44), for all $c, d\geq1$ we obtain
\begin{equation}
		\begin{split}
\|k^{c}W^{\alpha}_{g}(f)\|^{d/(c+d)}_{L^{2}_{\alpha}(\mathbb{R_{+}}\times \mathbb{R_{+}^{*}})}\|s^{d}W^{\alpha}_{g}(f)\|^{c/(c+d)}_{L^{2}_{\alpha}(\mathbb{R_{+}}\times \mathbb{R_{+}^{*}})}\geq
\left(\frac{\|kW^{\alpha}_{g}(f)\|_{L^{2}_{\alpha}(\mathbb{R_{+}}\times \mathbb{R_{+}^{*}})}\|sW^{\alpha}_{g}(f)\|_{L^{2}_{\alpha}(\mathbb{R_{+}}\times \mathbb{R_{+}^{*}})}}
{\|W^{\alpha}_{g}(f)\|^{2-1/c-1/d}_{L^{2}_{\alpha}(\mathbb{R_{+}}\times \mathbb{R_{+}^{*}})}}\right)^{cd/(c+d)}
		\end{split}
	\end{equation}
with equality if $c= d = 1$.

By (19) and (41), we obtain the desired result.
\end{proof}

\medskip

\noindent  {\bf Acknowledge}

This work is supported by the National Natural Science Foundation of China (No. 61671063).

% ----------------------------------------------------------------

\end{document}